\documentclass[hidelinks,onefignum,onetabnum]{siamart251216}

\usepackage{comment} % Load the package

\usepackage{lipsum}
\usepackage{amsfonts}
\usepackage{graphicx}
\usepackage{epstopdf}
\usepackage{algorithmic}
\ifpdf
  \DeclareGraphicsExtensions{.eps,.pdf,.png,.jpg}
\else
  \DeclareGraphicsExtensions{.eps}
\fi

\usepackage{amssymb,bm}
\usepackage{float}
\usepackage{hyperref}
\usepackage{tikz}
\usetikzlibrary{calc,arrows.meta,positioning}

\newcommand{\one}{\boldsymbol{1}}

\newsiamremark{remark}{Remark}
\newsiamremark{hypothesis}{Hypothesis}
\crefname{hypothesis}{Hypothesis}{Hypotheses}
\newsiamthm{claim}{Claim}
\newsiamremark{fact}{Fact}
\crefname{fact}{Fact}{Facts}
\newtheorem{assumption}[theorem]{Assumption}

\headers{Pressure Monotonicity for Biot}{H. Gu, M. Cai, and J. Li}

\title{Pressure Monotonicity for a Rotation-Based Biot Discretization with Circumcentric Mass Lumping\thanks{Submitted to the editors DATE.
\funding{H. Gu is supported by the National Natural Science Foundation of China (Grant No.~12601782) and Shenzhen University of Information Technology (Grant No.~SUIT2025KJ008). 
J. Li was partially supported by the Shenzhen Sci-Tech Fund No. RCJC20200714114556020, Guangdong Basic and Applied Research Fund No. 2023B1515250005.
M. Cai is supported in part by the U. S. National Science Foundation under Grant No. DMS-2602787.  
Any opinions, findings, and conclusions or recommendations expressed in this material are those of the author(s) and do not necessarily reflect the views of the National Science Foundation.}}}

\author{
Huipeng Gu\thanks{School of Humanities and Fundamental Sciences, Shenzhen University of Information Technology, Shenzhen, 518172, China.
(\email{guhuipeng@suit-sz.edu.cn}).}
\and 
Mingchao Cai\thanks{Department of Mathematics, Morgan State University, Baltimore, MD 21251, USA.
(\email{mingchao.cai@morgan.edu}).}
\and 
Jingzhi Li\thanks{Department of Mathematics, Southern University of Science and Technology, Shenzhen, 518055, China.
(\email{li.jz@sustech.edu.cn}).}
}

\usepackage{amsopn}

\ifpdf
\hypersetup{
  pdftitle={Pressure Monotonicity for a Rotation-Based Biot Discretization with Circumcentric Mass Lumping},
  pdfauthor={H. Gu, M. Cai, and J. Li}
}
\fi

\begin{document}

\maketitle

% REQUIRED
\begin{abstract}
We establish cellwise pressure monotonicity for a mixed finite element discretization of the rotation-based Biot system. 
The spatial discretization is based on compatible discrete de Rham complexes, using standard lowest-order elements for rotation, displacement, Darcy flux, and pressure.
For the temporal discretization, we employ the backward Euler method.
Exactness of the discrete complexes reduces the mechanics contribution to the pressure Schur complement to a scaled pressure mass matrix, yielding a positive diagonal mechanics capacity.
For the Darcy part, positive circumcentric facet weights yield a weighted cell graph Laplacian after flux elimination, while the mass-lumped product remains uniformly equivalent to the standard Darcy product.
The diagonal mechanics capacity and the weighted cell graph Laplacian make the condensed pressure matrix Stieltjes.
Consequently, the pressure propagator is entrywise nonnegative, which yields a cellwise discrete maximum principle.
Numerical experiments confirm the predicted convergence rates and show that the proposed discretization suppresses spurious cellwise pressure oscillations.
\end{abstract}

% REQUIRED
\begin{keywords}
Rotation-based poroelasticity;
Discrete de Rham complex;
%Mechanics Schur complement;
Mass lumping;
%$M$-matrix;
Discrete maximum principle;
Pressure monotonicity.
\end{keywords}

% REQUIRED
\begin{MSCcodes}
65M60, 74F10, 35B50
\end{MSCcodes}

\section{Introduction}
Poroelasticity describes the interaction between the deformation of a porous skeleton and fluid flow. The classical theory of poroelasticity was established by Biot~\cite{biot1941general} and is widely used in geomechanics, energy engineering, and biomechanics. In practical applications, the computational domain may have a complex geometry and the material properties may be heterogeneous. Consequently, analytical solutions are generally limited to simplified settings, making numerical methods essential for practical problems.

The numerical approximation of poroelasticity is challenging in nearly
incompressible and low-permeability regimes, where volumetric locking
and spurious pressure oscillations may occur
\cite{phillips2009overcoming,yi2017study}. Mixed and total-pressure
formulations provide effective approaches to stability and locking
\cite{oyarzua2016locking}, but inf--sup stability alone does not ensure
cellwise pressure monotonicity. Previous studies have demonstrated
the loss of monotonicity for stable discretizations and related
pressure oscillations to the sign structure of the condensed pressure
operator \cite{rodrigo2016stability,favino2017accuracy,ferreira2026spurious}.
For the coupled Biot system, this structure depends on both the
mechanics-induced pressure capacity and the discrete Darcy operator.
A monotonicity analysis must therefore account for these two
contributions simultaneously.

In this work, we adopt the rotation-based Biot formulation and
compatible mixed finite element spaces of
Boon et al.~\cite{boon2023mixed}, together with circumcentric
mass lumping for the Raviart--Thomas Darcy flux
\cite{brezzi2006error,hu2017nonconforming} and backward Euler time
discretization. Our main contribution is a structural pressure
monotonicity theorem in two and three dimensions. For constant
coefficients and the mixed mechanics boundary conditions specified
below, relative exactness yields an exact reduction of the mechanics
pressure Schur complement to a scaled pressure mass matrix.
Consequently, the effective pressure capacity is the positive
diagonal matrix
$H_h=(c_0+\alpha^2/(\lambda+2\mu))M_p$.
Positive assembled circumcentric facet weights make the condensed
Darcy operator a weighted cell-graph Laplacian with boundary
contributions. Together, these properties make the pressure matrix
Stieltjes for every time step $\tau>0$, yielding order-preserving
pressure propagation and a cellwise discrete maximum principle for
the homogeneous update. Positivity also holds for forced updates
with nonnegative effective loads. The prescribed mechanics boundary
conditions are essential hypotheses of this result because they
provide the relative exactness and divergence surjectivity required
for the exact capacity identity. Under stronger quantitative
geometric conditions, and assuming the corresponding mixed
projection estimates and suitable solution regularity, we further
derive first-order convergence in space and time.

%The remainder of the paper is organized as follows.
%Section~\ref{sec:model} presents the rotation-based Biot formulation and its lowest-order mixed finite element discretization with a circumcentric mass-lumped Darcy metric.
%Section~\ref{sec:monotonicity} derives the mechanics pressure Schur complement, the cell-graph representation of the Darcy operator, and the pressure monotonicity results.
%Section~\ref{sec:convergence} establishes the first-order space--time error estimate and discusses the three-dimensional Darcy approximation.
%Section~\ref{sec:numerics} presents numerical experiments for convergence, pressure monotonicity, and the Barry--Mercer problem.
%Section~\ref{sec:conclusion} concludes the paper.

\begin{comment}
% The outline is not required, but we show an example here.
The paper is organized as follows.
The formulation and discretization are presented in Section~\ref{sec:model}.
The main pressure monotonicity results are established in Section~\ref{sec:monotonicity}, while the first-order error estimate is derived in Section~\ref{sec:convergence}.
The numerical results are reported in Section~\ref{sec:numerics}, and conclusions are drawn in Section~\ref{sec:conclusion}.
\end{comment}

The paper is organized as follows.
Section~\ref{sec:model} presents the formulation and discretization.
Sections~\ref{sec:monotonicity} and~\ref{sec:convergence} establish pressure monotonicity and the first-order error estimate, respectively.
Section~\ref{sec:numerics} presents numerical experiments to verify the proposed approach and theoretical analysis.
Section~\ref{sec:conclusion} concludes the paper.

\section{Mathematical model and discretization}
\label{sec:model}

\subsection{Notation}
\label{sec:notation}

% Domain and Time Setting
Let $\Omega\subset\mathbb{R}^d$, $d\in\{2,3\}$, be a bounded, connected, contractible Lipschitz domain, polygonal if $d=2$ and polyhedral if $d=3$, with boundary \(\partial\Omega\). 
Let \(\nu\) denote the outward unit normal.
Let \(T_f>0\) be the prescribed final time, and consider $t \in [0,T_f]$. 
% For a time-dependent quantity \(\chi\), its time derivative is denoted by \(\dot \chi:=\partial_t \chi\).

% Spaces
Let \(L^2(\Omega)\) denote the space of square-integrable scalar functions on \(\Omega\), endowed with the inner product \((\cdot,\cdot)\) and the corresponding norm \(\lVert\cdot\rVert\).
The standard Sobolev spaces are denoted by $H^s(\Omega)$, with norm $\lVert\cdot\rVert_{H^s(\Omega)}$.
The curl operator $\nabla\times$ is understood in the standard sense: in $d=2$, it denotes the scalar-to-vector and vector-to-scalar curl operators, while in $d=3$ it denotes the standard vector curl. Let \( k_d:=\binom{d}{2},\) so that $k_2=1$ and $k_3=3$. We define
\begin{align*}
H(\nabla\times;\Omega)
:=
\{
\rho\in [L^2(\Omega)]^{k_d}:
\nabla\times\rho\in [L^2(\Omega)]^d
\}, \\
H(\nabla\cdot;\Omega)
:=
\{
 z\in [L^2(\Omega)]^d:
\nabla\cdot z\in L^2(\Omega)
\},
\end{align*}
equipped with the graph norms
\begin{align*}
\lVert \rho\rVert_{H(\nabla\times;\Omega)}^2
:=
\lVert \rho\rVert^2
+
\lVert \nabla\times\rho\rVert^2, \quad
\lVert z \rVert_{H(\nabla\cdot;\Omega)}^2
:=
\lVert z \rVert^2
+
\lVert \nabla\cdot z \rVert^2.
\end{align*}
In particular, $H(\nabla\times;\Omega)=H^1(\Omega)$ when $d=2$.
For a relatively open boundary portion $\Gamma\subset\partial\Omega$, we introduce the subspaces with homogeneous boundary conditions
\begin{align*}
H_\Gamma(\nabla\times;\Omega)
:=
\left\{
\rho\in H(\nabla\times;\Omega):
\nu\times \rho=0\ \text{on }\Gamma
\right\},\\
H_\Gamma(\nabla\cdot;\Omega)
:=
\left\{
 z\in H(\nabla\cdot;\Omega):
 z\cdot\nu=0\ \text{on }\Gamma
\right\}.
\end{align*}

For nonnegative quantities \(a\) and \(b\), we write \( a\lesssim b \) if there exists a constant \(C>0\), independent of the mesh size \(h\), such that \(a\le Cb\). We write \( a\simeq b \) if both \(a\lesssim b\) and \(b\lesssim a\) hold. The hidden constants may depend on the domain, the shape-regularity constant of the mesh family, and the physical parameters.

Vector and matrix inequalities with respect to the symbols \(\le\) and \(\ge\) are understood componentwise. For symmetric matrices, \(A\succeq0\) and \(A\succ0\) denote the Loewner order, i.e., positive semidefiniteness and positive definiteness, respectively. 
The all-ones vector of appropriate dimension is denoted by \(\one\).
A nonnegative matrix \(G\in\mathbb{R}^{n\times n}\) is called \emph{row-stochastic} if \(G\one=\one\), and \emph{row-substochastic} if \(G\one\le\one\). 
We also use the following standard matrix classes:
\begin{itemize}
    \item A $Z$-matrix has nonpositive off-diagonal entries.
    \item A nonsingular $M$-matrix is a $Z$-matrix with nonnegative inverse.
    \item A Stieltjes matrix is a symmetric positive-definite $M$-matrix.
\end{itemize}

\subsection{Rotation-based Biot formulation}
\label{subsec:continuous-model}

We consider the rotation-based Biot formulation in \cite{boon2023mixed}. In addition to the primary unknowns of displacement \(u\), Darcy flux \(v\), and pressure \(p\), the rotation \(r=\mu\nabla\times u\) is introduced as an auxiliary unknown. The resulting system reads
\begin{subequations}
\label{eq:continuous}
\begin{align}
  \mu^{-1}r-\nabla\times u &= 0, 
  \label{eq:continuous-a}\\
  \nabla\times r -\nabla\bigl((\lambda+2\mu)\nabla\cdot u-\alpha p\bigr) &= f, 
  \label{eq:continuous-b}\\
  K^{-1}v+\nabla p &= 0, 
  \label{eq:continuous-c}\\
  c_0 \partial_t p+\alpha\nabla\cdot \partial_t{u}+\nabla\cdot v &=s. 
  \label{eq:continuous-d}
\end{align}
\end{subequations}
Here, $\mu>0$ and $\lambda>0$ are the constant Lamé parameters, $\alpha>0$ is the Biot-Willis coefficient, $c_0\ge0$ is the storage coefficient, and $K>0$ is the permeability coefficient. 
The functions $ f$ and $s$ denote the body force and fluid source term, respectively.

Let \( \partial\Omega=\overline{\Gamma_r}\cup\overline{\Gamma_u} \) be a disjoint decomposition into relatively open boundary portions, with \(\Gamma_u\neq\varnothing\). 
We allow \(\Gamma_r=\varnothing\); if \(\Gamma_r\neq\varnothing\), we assume that \(\Gamma_r\) is contractible.
We then impose the following homogeneous boundary conditions:
\begin{align*}
\nu\cdot u &=0,
& \quad
\nu\times r &=0
& & \quad\text{on }\Gamma_r, \\ % \label{eq:bc-r}\\
\nu\times u &=0,
& \quad
(\lambda+2\mu)\nabla\cdot u-\alpha p&=0
& & \quad\text{on }\Gamma_u, \\ % \label{eq:bc-u}\\
& & p &=0
& & \quad\text{on }\partial\Omega . % \label{eq:bc-p}
\end{align*}
In two dimensions, the condition \(\nu\times r=0\) on \(\Gamma_r\) is understood as \(r=0\) on \(\Gamma_r.\)
To ensure uniqueness of the displacement field, we assume that the mixed boundary conditions eliminate all nontrivial rigid body motions, i.e.,
$\nu \cdot \phi=0$ on $\Gamma_r$ and $\nu \times \phi=0$ on $\Gamma_u$
imply $\phi = 0$ for every $\phi\in\mathrm{RM}(\Omega)$,
where $\mathrm{RM}(\Omega)$ denotes the space of rigid body motions. 

The initial data are assumed to be compatible with the quasi-static constraints of the Biot system. 
More precisely, we prescribe
\[
p(\cdot,0)=p_0,
\qquad
u(\cdot,0)=u_0,
\qquad
r(\cdot,0)=\mu\nabla\times u_0,
\qquad
v(\cdot,0)=-K\nabla p_0,
\]
with \(u_0\) the equilibrium displacement associated with \(p_0\) and \(f(0)\).

We introduce the spaces 
\[
  R=H_{\Gamma_r}(\nabla\times;\Omega),\qquad
  U=H_{\Gamma_r}(\nabla\cdot;\Omega),\qquad
  V=H(\nabla\cdot;\Omega),\qquad
  P=L^2(\Omega).
\]
The first three spaces inherit the structure of the relative de Rham complex 
\[
H_{\Gamma_r}(\nabla\times;\Omega)
\xrightarrow{\nabla\times}
H_{\Gamma_r}(\nabla\cdot;\Omega)
\xrightarrow{\nabla\cdot}
L^2(\Omega),
\]
and the discrete spaces below are chosen to preserve this structure.
The continuous weak formulation reads: For almost every $t \in (0, T_f]$, find \( (r(t),u(t),v(t),p(t))\in R\times U\times V\times P \) such that, for all \((\rho, w, z,q)
\in R\times U\times V\times P\), 
\begin{subequations}
\label{eq:weak-form}
\begin{align}
  \mu^{-1}(r,\rho)
  -(u,\nabla\times\rho)
  &=0, \label{eq:weak-rotation}\\
  (\nabla\times r, w)
  +(\lambda+2\mu)(\nabla\cdot u,
      \nabla\cdot w)
  -\alpha(p,\nabla\cdot w)
  &=( f, w), \label{eq:weak-momentum}\\
  K^{-1}(v, z)
  -(p,\nabla\cdot z)
  &=0, \label{eq:weak-darcy}\\
  c_0(\partial_t p,q)
  +\alpha(\nabla\cdot \partial_t{u},q)
  +(\nabla\cdot v,q)
  &=(s,q). \label{eq:weak-mass}
\end{align}
\end{subequations}

\subsection{Discrete formulation}
\label{subsec:discrete-model}

The finite element discretization is designed to satisfy two key structural properties.
First, we employ lowest-order mixed finite elements so that the pressure is represented by cellwise constants and the Darcy flux by facet-based Raviart--Thomas degrees of freedom. 
With this choice of spaces, the Darcy flux inner product admits a circumcentric mass-lumped approximation, resulting in a diagonal flux mass matrix; see, e.g.,
\cite{brezzi2006error,hu2017nonconforming}. 
Second, the mechanics spaces are chosen as a compatible finite element subcomplex of the de Rham sequence, which provides the discrete exactness and stability properties required \cite{boon2023mixed}.

Let \(\{\mathcal T_h\}\) be a shape-regular family of conforming simplicial meshes of \(\Omega\) that resolves the boundary decomposition. We use the lowest-order conforming compatible family considered in \cite{boon2023mixed} and define
\[
R_h\times U_h\times V_h\times P_h
:=
\bigl(
\mathcal{R}_h
\times \mathrm{RT}_0(\mathcal{T}_h)
\times \mathrm{RT}_0(\mathcal{T}_h)
\times \mathrm{DG}_0(\mathcal{T}_h)
\bigr)
\cap
(R\times U\times V\times P),
\]
where
\[
\mathcal{R}_h
:=
\begin{cases}
L_1(\mathcal{T}_h), & d=2,\\[1mm]
N_0^{(1)}(\mathcal{T}_h), & d=3.
\end{cases}
\]
Thus, the rotation is approximated by continuous piecewise linear Lagrange elements in two dimensions and by lowest-order Nédélec elements of the first kind in three dimensions. The spaces $R_h$, $U_h$, and $P_h$ form the relevant part of a
compatible discrete relative de Rham complex,
\[
R_h
\xrightarrow{\nabla\times}
U_h
\xrightarrow{\nabla\cdot}
P_h .
\]
Under the standing topological assumptions on
$(\Omega,\Gamma_r)$, the relative discrete complex is exact at
$U_h$, while $\Gamma_u\neq\varnothing$ gives the surjectivity of
the discrete divergence. Hence,
%Under the assumptions on $(\Omega,\Gamma_r,\mathcal{T}_h)$ and \(\Gamma_u\neq\varnothing\), this discrete complex is exact in the sense that
\[
\nabla\times R_h
=
\ker\!\left(\nabla\cdot|_{U_h}\right),
\qquad
\nabla\cdot U_h=P_h .
\]
In addition, the standard uniform inf-sup stability of the $\mathrm{RT}_0$--$\mathrm{DG}_0$ pair with the corresponding boundary conditions gives 
\begin{equation}
\label{eq:uniform-inf-sup}
\sup_{0\ne w_h\in U_h}
\frac{(\nabla\cdot w_h,q_h)}
     {\|w_h\|_{H(\nabla\cdot;\Omega)}}
\ge \beta_u\|q_h\|,
\ 
\sup_{0\ne z_h\in V_h}
\frac{(\nabla\cdot z_h,q_h)}
     {\|z_h\|_{H(\nabla\cdot;\Omega)}}
\ge \beta_v\|q_h\|, \quad \forall\,q_h\in P_h,
\end{equation}
where \(\beta_u,\beta_v>0\) are independent of \(h\).

\begin{remark}
\label{rem:higher-order-mechanics}
The structural arguments below rely on the compatible exact sequence property, rather than on the use of a lowest order discretization for the mechanics problem.
In two dimensions, an alternative compatible sequence is
\[
L_2(\mathcal{T}_h)
\xrightarrow{\nabla\times}
\mathrm{BDM}_1(\mathcal{T}_h)
\xrightarrow{\nabla\cdot}
\mathrm{DG}_0(\mathcal{T}_h).
\]
The derivation of the mechanics Schur complement below depends only on this compatible structure and therefore extends unchanged to such choices.
\end{remark}

%To expose the monotonicity structure of the discrete pressure operator, we replace the standard Darcy bilinear form with a circumcentric mass-lumped bilinear form on the lowest-order Raviart--Thomas space.
% As shown below, this construction yields a diagonal Darcy flux matrix in the canonical facet basis of $\mathrm{RT}_0$.
%For each interior facet \(F\in\mathcal F_h^{i}\), fix an arbitrary unit normal \(\nu_F\). For each boundary facet \(F\in\mathcal F_h^\partial\), let \(T(F)\) denote its unique adjacent cell and set \(\nu_F:=\nu_{T(F),F}.\)

Let $\mathcal F_h=\mathcal F_h^{i}\cup\mathcal F_h^{\partial}$ denote the sets of interior and boundary facets of \(\mathcal T_h\).
For each cell \(T\in\mathcal T_h\) and each facet \(F\subset\partial T\), let \(\nu_{T,F}\) denote the outward unit normal to \(F\) from \(T\).
For each facet \(F\in\mathcal F_h\), we fix a unit normal \(\nu_F\). On interior facets the orientation is arbitrary, so that \(\nu_{T,F}=\pm\nu_F\) for the adjacent cells, while on boundary facets \(\nu_F\) is chosen as the outward normal.
The corresponding \(\mathrm{RT}_0\) flux degree of freedom is defined by
\begin{align}
\label{flux-equation}
\Phi_F( z_h)
:=
\int_F  z_h\cdot\nu_F\,ds.
\end{align}

For $T\in\mathcal T_h$ and $F\subset\partial T$, let $c_T$ and $c_F$ denote their circumcenters, respectively. We define the signed local circumcentric distance and the associated facet weight by
\[
d_{T,F}^{s}
:=
(c_F-c_T)\cdot\nu_{T,F},
\qquad
a_F
:=
\sum_{T': F\subset\partial T'}
\frac{d_{T',F}^{s}}{K|F|}.
\] 
Finally, we define the circumcentric mass-lumped Darcy bilinear form by
\begin{equation}
\label{eq:mass-lumped-darcy-form}
a_{v,h}^{L}(v_h,z_h)
:=
\sum_{F\in\mathcal F_h}
a_F\,\Phi_F(v_h)\Phi_F(z_h).
\end{equation}
In the canonical \(\mathrm{RT}_0\) facet basis dual to the flux degrees of freedom \(\{\Phi_F\}\), this form yields a diagonal Darcy matrix with diagonal entries \(a_F\).
We then impose the following positivity condition, which ensures that \(a_{v,h}^{L}(\cdot,\cdot)\) defines an inner product on \(V_h\).
\begin{assumption}[Geometric positivity]
\label{ass:geometric-positivity}
The circumcentric weights satisfy
\[
a_F>0 \qquad \forall F\in\mathcal F_h .
\]
A sufficient condition is that every simplex be well centered. 
\end{assumption}

The above assumption is sufficient for \(M\)-matrix argument. For the convergence analysis, we impose the following stronger uniform version of geometric positivity:
\begin{equation}
\label{eq:uniform-well-centered}
  c_*h_T
  \le d_{T,F}^{s}
  \le C_*h_T
  \qquad
  \forall\,T\in\mathcal T_h,\quad F\subset\partial T,
\end{equation}
where \(h_T:=\operatorname{diam}(T)\), and \(c_*,C_*>0\) are independent of \(h\). 

For a simplex \(T\), the segment joining its circumcenter \(\boldsymbol c_T\) to the circumcenter \(\boldsymbol c_F\) of a facet \(F\subset\partial T\) is orthogonal to \(F\). Hence
\[
|d_{T,F}^{s}|
=
\operatorname{dist}(\boldsymbol c_T,F)
\le R_T
\lesssim h_T,
\]
where \(R_T\) is the circumradius of \(T\). 
Thus, the upper bound in \eqref{eq:uniform-well-centered} follows from shape regularity. 
The lower bound prevents the circumcenter from approaching any facet as the mesh is refined.
Under this geometric condition, the following lemma establishes the uniform equivalence between the circumcentric mass-lumped Darcy bilinear form and the \(L^2\) inner product on \(V_h\).

\begin{lemma}
\label{lem:lumped-equivalence}
Assume that the mesh family is shape regular and satisfies \eqref{eq:uniform-well-centered}. There exist constants \(c_L,C_L>0\) independent of \(h\), such that
\begin{equation}
\label{eq:lumped-equivalence}
  c_LK^{-1}\lVert z_h\rVert^2
  \le
  a_{v,h}^{L}( z_h, z_h)
  \le
  C_LK^{-1}\lVert z_h\rVert^2
  \qquad
  \forall\, z_h\in V_h.
\end{equation}
Moreover,
\begin{equation}
\label{eq:lumped-continuity}
  \bigl|a_{v,h}^{L}(w_h, z_h)\bigr|
  \le
  C_LK^{-1}
  \lVert w_h\rVert
  \lVert z_h\rVert
  \qquad
  \forall\,w_h, z_h\in V_h.
\end{equation}
\end{lemma}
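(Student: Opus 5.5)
The proof will be a local, cell-by-cell scaling argument that reduces both sides of \eqref{eq:lumped-equivalence} to the same sum over cells. First rewrite the lumped form cellwise. Each interior facet has two adjacent cells and each boundary facet one, and $a_F$ is the sum of the cellwise contributions $d^s_{T,F}/(K|F|)$. Hence
\[
a_{v,h}^{L}(z_h,z_h)=\sum_{T\in\mathcal T_h}\sum_{F\subset\partial T}\frac{d^s_{T,F}}{K|F|}\,\Phi_F(z_h)^2 .
\]
Under \eqref{eq:uniform-well-centered} every summand is nonnegative. Moreover $d^s_{T,F}/|F|\simeq h_T^{1-(d-1)}=h_T^{2-d}$ by shape regularity, since $|F|\simeq h_T^{d-1}$. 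This gives
\[
a_{v,h}^{L}(z_h,z_h)\simeq K^{-1}\sum_T h_T^{2-d}\sum_{F\subset\partial T}\Phi_F(z_h)^2 .
\]

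Next I would show that on each cell $\|z_h\|_{L^2(T)}^2\simeq h_T^{2-d}\sum_{F\subset\partial T}\Phi_F(z_h)^2$ for all $z_h\in\mathrm{RT}_0(T)$. The argument is the standard one on a reference element. On $\hat T$, the space $\mathrm{RT}_0(\hat T)$ is $(d+1)$-dimensional and the facet fluxes are unisolvent. So $\|\hat z\|_{L^2(\hat T)}^2$ and $\sum_{\hat F}\hat\Phi_{\hat F}(\hat z)^2$ are two norms on a finite-dimensional space and are therefore equivalent. Transfer to $T$ with the contravariant Piola map $z=|\det B_T|^{-1}B_T\hat z\circ F_T^{-1}$. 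This map preserves facet fluxes, $\Phi_F(z)=\pm\hat\Phi_{\hat F}(\hat z)$, and it satisfies $\|z\|_{L^2(T)}^2\simeq |\det B_T|^{-1}\|B_T\|^2\|\hat z\|^2\simeq h_T^{-d}h_T^{2}\|\hat z\|^2$, with constants depending only on shape regularity. Summing over $T$ then gives \eqref{eq:lumped-equivalence} with $c_L,C_L$ depending only on $c_*$, $C_*$ and the shape-regularity constant.

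For the continuity estimate \eqref{eq:lumped-continuity}, Assumption~\ref{ass:geometric-positivity} (which is implied by \eqref{eq:uniform-well-centered}) makes $a_{v,h}^{L}$ a symmetric positive semidefinite form. The Cauchy--Schwarz inequality for that form, combined with the upper bound in \eqref{eq:lumped-equivalence}, gives
\[
|a_{v,h}^L(w_h,z_h)|\le a_{v,h}^L(w_h,w_h)^{1/2}a_{v,h}^L(z_h,z_h)^{1/2}\le C_LK^{-1}\|w_h\|\|z_h\| .
\]

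I expect the main obstacle to be keeping track of sign and positivity. The equivalence relies on the \emph{cellwise} contributions $d^s_{T,F}$ being bounded below, not only on the assembled $a_F>0$. This is precisely why the proof uses \eqref{eq:uniform-well-centered} rather than Assumption~\ref{ass:geometric-positivity} alone. The other point needing care is that the Piola scaling constants are uniform under shape regularity; this part is routine.
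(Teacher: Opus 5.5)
Your proposal is correct and follows the paper's proof essentially step for step. It uses the same cellwise rewriting of $a_{v,h}^{L}$, the same scaling $d_{T,F}^{s}/|F|\simeq h_T^{2-d}$ from shape regularity and \eqref{eq:uniform-well-centered}, the same reference-element norm equivalence transported by the flux-preserving Piola map (which you spell out more explicitly than the paper's citation does), and the same Cauchy--Schwarz step for \eqref{eq:lumped-continuity}. One small caveat on your closing remark: the cellwise lower bound is what this argument uses, but it is not strictly necessary, because $\|z_h\|^2\simeq\sum_{F}h_F^{2-d}\Phi_F(z_h)^2$ with $h_F:=\operatorname{diam}(F)$ under shape regularity, so a uniform bound $a_F|F|\gtrsim K^{-1}h_F$ on the assembled weights would also suffice.
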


\begin{proof}
For \(F\subset\partial T\), we define the local outward normal flux by
\[
  \Phi_{T,F}( z_h)
  :=
  \int_F
  \left. z_h\right|_T
  \cdot \nu_{T,F}\, ds.
\]
Since \(\nu_{T,F}\) and \(\nu_F\) are unit normals to the same facet, they differ only by a sign.
\[
  \Phi_{T,F}(z_h)=\pm\Phi_F(z_h).
\]
Thus, the orientation signs cancel in the product.
\[
\Phi_{T,F}(w_h)\Phi_{T,F}(z_h)
=
\Phi_F(w_h)\Phi_F(z_h) \qquad F\subset \partial T.
\]
Substituting the definition of \(a_F\) into \eqref{eq:mass-lumped-darcy-form} gives
\begin{align}
\label{eq:local-lumped-darcy-form}
a_{v,h}^{L}(w_h,z_h)
&=
K^{-1}
\sum_{F\in\mathcal F_h}
\sum_{T: F\subset \partial T}
\frac{d_{T,F}^{s}}{|F|}
\Phi_{T,F}(w_h)\Phi_{T,F}(z_h)
\notag \\
&=
K^{-1}
\sum_{T\in\mathcal T_h}
\sum_{F\subset\partial T}
\frac{d_{T,F}^{s}}{|F|}
\Phi_{T,F}(w_h)\Phi_{T,F}(z_h).
\end{align}
The shape regularity and \eqref{eq:uniform-well-centered} imply
\(
  |F|\simeq h_T^{d-1},
  d_{T,F}^{s}\simeq h_T.
\)
Thus, we have
\begin{equation}
\label{eq:local-lumped-weight-scaling}
  \frac{d_{T,F}^{s}}{|F|}
  \simeq h_T^{2-d},
\end{equation}
with constants independent of \(T\), \(F\), and \(h\).

Standard properties of the \(\mathrm{RT}_0\) element, including unisolvence of the facet flux degrees of freedom, preservation of integral normal fluxes under the affine Piola transformation, and the corresponding scaling estimates, are given in \cite[Lemmas~2.1.6, 2.1.8 and Proposition~2.3.4]{boffi2013mixed}. On the reference simplex, unisolvence implies that the Euclidean norm of the facet flux vector defines a norm on \(\mathrm{RT}_0\), which is equivalent to the \(L^2\) norm by finite dimensionality. Consequently,
\[
  \lVert z_h\rVert_{L^2(T)}^2
  \simeq
  h_T^{2-d}
  \sum_{F\subset\partial T}
  \left( \Phi_{T,F}(z_h) \right)^2
  \qquad
  \forall\,\left. z_h\right|_T\in\mathrm{RT}_0(T).
\]
Combining this estimate with \eqref{eq:local-lumped-weight-scaling} gives
\[
  \sum_{F\subset\partial T}
  \frac{d_{T,F}^{s}}{|F|}
  \left(\Phi_{T,F}( z_h)\right)^2
  \simeq
  \lVert z_h\rVert_{L^2(T)}^2.
\]
Applying the above local estimate to \eqref{eq:local-lumped-darcy-form} with \(w_h=z_h\) yields
\[
  a_{v,h}^{L}(z_h, z_h)
  \simeq
  K^{-1}
  \sum_{T\in\mathcal T_h}
  \lVert z_h\rVert_{L^2(T)}^2
  =
  K^{-1}
  \lVert z_h\rVert^2.
\]
This proves \eqref{eq:lumped-equivalence}. 
Since \(a_{v,h}^{L}(\cdot,\cdot)\) is an inner product, the Cauchy--Schwarz inequality and the upper bound in \eqref{eq:lumped-equivalence} give
\[
  \bigl|a_{v,h}^{L}(w_h,z_h)\bigr|
  \le
  a_{v,h}^{L}(w_h,w_h)^{1/2}
  a_{v,h}^{L}(z_h,z_h)^{1/2}
  \le
  C_LK^{-1}
  \lVert w_h\rVert
  \lVert z_h\rVert,
\]
which proves \eqref{eq:lumped-continuity}.
\end{proof}

For the temporal discretization, let
\( 0=t_0<t_1<\cdots<t_N=T_f \)
be a uniform partition of $[0,T_f]$ with time step $\tau=T_f/N$. 
For any time-dependent quantity $\chi$, we write $\chi^n:=\chi(t_n)$. 
We employ backward Euler time stepping and, for a discrete sequence $\{\chi^n\}_{n=0}^{N}$, define
\[
\delta_t \chi^n
:=
\frac{\chi^n-\chi^{n-1}}{\tau},
\qquad
1\le n\le N.
\]

The initial data are assumed to be compatible with the discrete quasi-static constraints.
The fully discrete scheme reads as follows. For each \(n=1,\ldots,N\), find
\(
  (r_h^n,u_h^n,v_h^n,p_h^n)
  \in R_h\times U_h\times V_h\times P_h,
\)
such that, for all
\((\rho_h, w_h, z_h,q_h)
\in R_h\times U_h\times V_h\times P_h\),
\begin{subequations}
\label{eq:discrete-weak-form}
\begin{align}
  \mu^{-1}(r_h^n,\rho_h)
  -(u_h^n,\nabla\times\rho_h)
  & = 0, \label{eq:discrete-weak-rotation}\\
  (\nabla\times r_h^n, w_h)
  +(\lambda+2\mu)(\nabla\cdot u_h^n,
      \nabla\cdot w_h)
  -\alpha(p_h^n,\nabla\cdot w_h)
  & = (f^n, w_h),
  \label{eq:discrete-weak-momentum}\\
  a_{v,h}^{L}(v_h^n, z_h)
  -(p_h^n,\nabla\cdot z_h)
  & = 0,
  \label{eq:discrete-weak-darcy}\\
  c_0( \delta_t p_h^n,q_h)
  +\alpha(\nabla\cdot \delta_t u_h^n,q_h)
  + (\nabla\cdot v_h^n,q_h)
  & = (s^n,q_h).
  \label{eq:discrete-weak-mass}
\end{align}
\end{subequations}

We next record the algebraic form of
\eqref{eq:discrete-weak-form}. Choose bases
$\{\psi_i^r\}_{i=1}^{N_r}$,
$\{\psi_j^u\}_{j=1}^{N_u}$,
$\{\psi_F^v\}_{F\in\mathcal F_h}$, and
$\{\psi_T^p\}_{T\in\mathcal T_h}$
of $R_h$, $U_h$, $V_h$, and $P_h$, respectively. 
Accordingly, we write
\[
r_h^n=\sum_{i=1}^{N_r} r_i^n \psi_i^r,
\quad
u_h^n=\sum_{j=1}^{N_u} u_j^n \psi_j^u,
\quad
v_h^n=\sum_{F\in\mathcal F_h}v_F^n \psi_F^v,
\quad
p_h^n=\sum_{T\in\mathcal T_h}p_T^n\psi_T^p,
\]
and denote the corresponding coefficient vectors by $\mathbf r^n$, $\mathbf u^n$, $\mathbf v^n$, and $\mathbf p^n$.
Let $M_r$ and $M_p$ denote the $L^2$ mass matrices on $R_h$ and $P_h$, respectively:
\[
(M_r)_{ki}
:=
( \psi_i^r, \psi_k^r),
\qquad
(M_p)_{ST}
:=
(\psi_T^p,\psi_S^p).
\]
The mass matrix \(M_r\) is symmetric positive definite and therefore invertible. Since $\psi_T^p=\chi_T$ in $P_h$, the entries of $\mathbf p^n$ are precisely the cellwise pressure values, yielding
\[
M_p = \operatorname{diag}\bigl(|T|\bigr)_{T\in\mathcal T_h}.
\]
Define the discrete coupling matrices by
\begin{align*}
(C_h)_{ji}
& :=
(\nabla\times \psi_i^r,
  \psi_j^u),
\qquad
& (B_u)_{Tj}
& :=
(\psi_T^p,\nabla\cdot \psi_j^u),
\\
(B_v)_{TF}
& :=
(\psi_T^p,\nabla\cdot \psi_F^v),
\qquad
& (D_u)_{jk}
& :=
(\nabla\cdot \psi_k^u,
 \nabla\cdot \psi_j^u).
\end{align*}
For the Darcy flux space \(V_h\), we choose the canonical
\(\mathrm{RT}_0\) facet basis dual to the flux degrees of freedom
\(\{\Phi_F\}_{F\in\mathcal F_h}\). Thus, for \(F,G\in\mathcal F_h\),
\begin{equation}
\label{RT0-flux}
\Phi_F(\psi_G^v)
= 
\int_F \psi_G^v \cdot \nu_F \,ds
=
\delta_{FG}
=
\begin{cases}
1, & F=G,\\
0, & F\neq G.
\end{cases}
\end{equation}
%It follows that
%\( v_F^n=\Phi_F(v_h^n), \)
%so that the coefficient \(v_F^n\) is precisely the total discrete normal flux through the facet \(F\). 
Writing
\( v_h^n=\sum_{G\in\mathcal F_h}v_G^n\psi_G^v, \)
we obtain
\( v_F^n=\Phi_F(v_h^n), \)
so that \(v_F^n\) is precisely the signed integral normal flux of
\(v_h^n\) across \(F\), with respect to the chosen orientation \(\nu_F\).
Using \eqref{eq:mass-lumped-darcy-form}, the entries of the Darcy matrix satisfy, for \(F,G\in\mathcal F_h\),
\[
\begin{aligned}
(A_{v,h}^{L})_{FG}
:=
a_{v,h}^{L}(\psi_G^v,\psi_F^v) 
=
\sum_{E\in\mathcal F_h}
a_E\,\Phi_E(\psi_G^v)\Phi_E(\psi_F^v) 
=
\sum_{E\in\mathcal F_h}
a_E\,\delta_{EG}\delta_{EF}
=
a_F\delta_{FG}.
\end{aligned}
\]
Hence, under Assumption~\ref{ass:geometric-positivity}, all diagonal entries are positive, so \(A_{v,h}^{L}\) is symmetric positive definite and therefore invertible. In particular,
\[
A_{v,h}^{L}
=
\operatorname{diag}(a_F)_{F\in\mathcal F_h}.
\]

The load vectors are defined by
\[
(\mathbf f^n)_j
:=
( f^n, \psi_j^u),
\qquad
(\mathbf s^n)_T
:=
(s^n,\psi_T^p).
\]

The algebraic system corresponding to
\eqref{eq:discrete-weak-form} is
\begin{equation}
\label{eq:fully-discrete-matrix}
\begin{bmatrix}
\mu^{-1}M_r & -C_h^\top & 0 & 0
\\
C_h & (\lambda+2\mu)D_u & 0 & -\alpha B_u^\top
\\
0 & 0 & A_{v,h}^{L} & -B_v^\top
\\
0 & \alpha B_u & \tau B_v & c_0M_p
\end{bmatrix}
\begin{bmatrix}
\mathbf r^n\\
\mathbf u^n\\
\mathbf v^n\\
\mathbf p^n
\end{bmatrix}
=
\begin{bmatrix}
0\\
\mathbf f^n\\
0 \\
c_0M_p\mathbf p^{n-1}
+\alpha B_u\mathbf u^{n-1}
+\tau\mathbf s^n
\end{bmatrix}.
\end{equation}

Eliminating the rotation variable gives the discrete mechanics matrix
\begin{equation}
\label{eq:mechanics-matrix}
A_{u,h}
:=
\mu C_hM_r^{-1}C_h^\top
+
(\lambda+2\mu)D_u .
\end{equation}
As shown in the following Subsection~\ref{subsec:mechanics-capacity}, \(A_{u,h}\) is symmetric positive definite and hence invertible under the standing assumptions.
Define
\begin{equation*}
%\label{eq:pressure-operators}
H_h
:=
c_0M_p
+\alpha^2B_uA_{u,h}^{-1}B_u^\top,
\qquad
G_h
:=
B_v(A_{v,h}^{L})^{-1}B_v^\top .
\end{equation*}
Eliminating the displacement and Darcy flux from \eqref{eq:fully-discrete-matrix} yields
\begin{equation*}
%\label{eq:pressure-schur}
(H_h+\tau G_h)\mathbf p^n
=
c_0M_p\mathbf p^{n-1}
+\alpha B_u\mathbf u^{n-1}
+\tau\mathbf s^n
-\alpha B_uA_{u,h}^{-1}\mathbf f^n .
\end{equation*}
Using
\(
\mathbf u^{n-1}
=
A_{u,h}^{-1}
(
\mathbf f^{n-1}
+\alpha B_u^\top\mathbf p^{n-1}
),
\)
we obtain the one-step pressure equation
\begin{equation*}
%\label{eq:pressure-update}
K_h\mathbf p^n
=
H_h\mathbf p^{n-1}
+\boldsymbol\ell^n,
\quad
K_h:=H_h+\tau G_h,
\quad 
\boldsymbol\ell^n
:=
\alpha B_uA_{u,h}^{-1}
(\mathbf f^{n-1}-\mathbf f^n)
+\tau\mathbf s^n.
\end{equation*}
Equivalently,
\begin{equation}
\label{eq:pressure-propagation}
\mathbf p^n
=
S_h\mathbf p^{n-1}
+
K_h^{-1}\boldsymbol\ell^n,
\qquad
S_h:=K_h^{-1}H_h .
\end{equation}
The matrix $S_h$ governs the homogeneous one-step pressure propagation, while $K_h^{-1}\boldsymbol\ell^n$ represents the additive response to external loading and source terms. % Accordingly, the monotonicity analysis below focuses on $S_h$.

\section{Pressure monotonicity}
\label{sec:monotonicity}

Pressure monotonicity is an entrywise algebraic property and is not implied by energy stability. 
As emphasized in \cite{ferreira2026spurious}, monotonicity requires additional algebraic structure to ensure a discrete maximum principle and to exclude local overshoots and undershoots. 
This motivates the use of the $M$-matrix framework for the pressure equation.
The following result is a standard consequence of the theory of nonsingular $M$-matrices \cite{windisch1989mmatrices}.
\begin{lemma}
\label{lem:stieltjes}
Let \(A\in\mathbb R^{m\times m}\) be a symmetric positive-definite \(Z\)-matrix. Then \(A\) is a Stieltjes matrix, and hence a nonsingular \(M\)-matrix. In particular,
\[
A^{-1}\ge0.
\]
Moreover, if $H\ge0$, then
\(
A^{-1}H\ge0.
\)
\end{lemma}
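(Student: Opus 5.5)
The plan is to show that \(A\) is a nonsingular \(M\)-matrix by writing it in the form \(A=sI-B\) with \(B\ge0\) and \(\rho(B)<s\), and then to read off \(A^{-1}\ge0\) from a Neumann series. Symmetry together with positive definiteness then gives the Stieltjes property directly from the definition recalled in Section~\ref{sec:notation}. The final claim \(A^{-1}H\ge0\) is immediate afterwards, since a product of two entrywise nonnegative matrices is entrywise nonnegative.

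First, positive definiteness gives \(a_{ii}=e_i^\top Ae_i>0\) for every \(i\). Choose \(s:=\max_i a_{ii}>0\) and set \(B:=sI-A\). The diagonal entries \(s-a_{ii}\) are nonnegative, and the off-diagonal entries \(-a_{ij}\) are nonnegative by the \(Z\)-matrix property, so \(B\ge0\).

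Next, I need \(\rho(B)<s\). Since \(B\) is symmetric, its eigenvalues are \(s-\lambda_k(A)\), where \(\lambda_k(A)>0\). Each such eigenvalue is therefore strictly less than \(s\). For the lower side, Perron--Frobenius says that for a nonnegative matrix \(\rho(B)\) is itself an eigenvalue of \(B\). So \(\rho(B)=s-\lambda_k(A)\) for some \(k\), and hence \(\rho(B)<s\). If one prefers to avoid Perron--Frobenius, an alternative is \(\rho(B)=\max_{\|x\|=1}|x^\top Bx|\le\max_{\|x\|=1}|x|^\top B|x|\), together with the fact that this last maximum is the largest eigenvalue \(s-\lambda_{\min}(A)<s\). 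This spectral-radius bound is the only step needing any care: the requirement \(\rho(B)<s\) fails for general \(Z\)-matrices, and it is exactly where positive definiteness is used.

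Finally, \(A=s(I-B/s)\) with \(\rho(B/s)<1\), so the Neumann series converges and
\[
A^{-1}=s^{-1}\sum_{k\ge0}(B/s)^k\ge0.
\]
Thus \(A\) is a \(Z\)-matrix with nonnegative inverse, that is, a nonsingular \(M\)-matrix. Being in addition symmetric positive definite, it is Stieltjes. For \(H\ge0\), each entry \((A^{-1}H)_{ij}=\sum_k (A^{-1})_{ik}H_{kj}\) is a sum of products of nonnegative numbers, so \(A^{-1}H\ge0\).
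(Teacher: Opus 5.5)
Your proof is correct. The paper does not prove this lemma. It states it as a standard consequence of nonsingular $M$-matrix theory and cites \cite{windisch1989mmatrices}.

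Your argument gives a self-contained version of that standard fact. You use the splitting $A=sI-B$ with $B\ge0$, then the bound $\rho(B)<s$, then a Neumann series. The bound $\rho(B)<s$ is exactly where positive definiteness enters, and you obtain it either through Perron--Frobenius or through the estimate $|x^\top Bx|\le|x|^\top B|x|\le\lambda_{\max}(B)=s-\lambda_{\min}(A)$.

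Your route also establishes both the classical splitting characterization of a nonsingular $M$-matrix ($sI-B$ with $B\ge0$ and $\rho(B)<s$) and the inverse-nonnegativity property that the paper takes as its definition in Section~\ref{sec:notation}. So the conclusion holds under either convention. The Rayleigh-quotient variant needs nothing beyond the spectral theorem for symmetric matrices. The paper's citation buys brevity, and it gives access to the full list of equivalent $M$-matrix characterizations, but it leaves the argument outside the paper.
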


If \(H_h\) is positive diagonal and \(G_h\) is a symmetric
positive-semidefinite \(Z\)-matrix, then
\(K_h=H_h+\tau G_h\) is a symmetric positive-definite \(Z\)-matrix
and hence a Stieltjes matrix. Lemma~\ref{lem:stieltjes} therefore gives
\(K_h^{-1}\ge0\), and since \(H_h\ge0\),
\(
S_h=K_h^{-1}H_h\ge0.
\)

\subsection{Mechanics capacity}
\label{subsec:mechanics-capacity}

We first examine the mechanics contribution to the matrix $H_h = c_0M_p
+\alpha^2B_uA_{u,h}^{-1}B_u^\top$. Although
\(
B_uA_{u,h}^{-1}B_u^\top
\)
contains the inverse of the global mechanics matrix, the compatible discrete de Rham structure yields an exact reduction of this Schur complement to a scalar multiple of the pressure mass matrix. This identity is the key to showing that $H_h$ is positive diagonal.

Define the $L^2$-orthogonal complement of the discrete curl space by
\[
W_h := (\nabla\times R_h)^{\perp_{L^2}}
= \{ w_h\in U_h: (w_h,\nabla\times \rho_h)=0 \quad\forall\, \rho_h\in R_h \}.
\]
By the definition of $C_h$, if $\mathbf w$ is the coefficient vector of
$w_h\in U_h$, we have
\[
C_h^\top\mathbf w=0
\quad\Longleftrightarrow\quad
w_h\in W_h.
\]
The definition of \(W_h\) gives the orthogonal decomposition
\(
U_h= (\nabla\times R_h) \oplus W_h.
\)

Moreover, the restriction
\(
\nabla\cdot:W_h\longrightarrow P_h
\)
is an isomorphism. Indeed, by the discrete de Rham complex,
\(
\ker(\nabla\cdot|_{U_h})=\nabla\times R_h,
\)
and hence
\[
W_h\cap\ker(\nabla\cdot|_{U_h})
=
W_h\cap(\nabla\times R_h)
=
\{0\},
\]
which proves injectivity.
To prove surjectivity, let $q_h\in P_h$. By the discrete de Rham complex, we have
\(
\nabla\cdot U_h=P_h.
\)
There exists $u_h\in U_h$ such that
\[
\nabla\cdot u_h=q_h.
\]
Using the above decomposition, write
\[
u_h
=
\nabla\times \rho_h+w_h,
\qquad
 \rho_h\in R_h,
\quad
w_h\in W_h.
\]
Since
\(
\nabla\cdot(\nabla\times \rho_h)=0,
\)
it follows that
\(
\nabla\cdot w_h=q_h,
\)
which proves bijective.

This decomposition also shows immediately from \eqref{eq:mechanics-matrix} that \(A_{u,h}\) is positive definite.
Let \(w_h\in U_h\) be arbitrary, with coefficient vector
\(\mathbf w\in\mathbb R^{N_u}\). If
\[
\mathbf w^\top A_{u,h}\mathbf w
=
\mu(C_h^\top\mathbf w)^\top M_r^{-1}(C_h^\top\mathbf w)
+
(\lambda+2\mu) \lVert \nabla\cdot w_h \rVert_0^2
=
0,
\]
then, we have
\[
C_h^\top\mathbf w=0,
\qquad
\nabla\cdot w_h=0.
\]
The first relation gives $w_h\in W_h$. The second relation,
together with the injectivity of
\(
\nabla\cdot|_{W_h},
\)
implies $w_h=0$. Therefore
\(
A_{u,h}\succ0,
\)
and in particular $A_{u,h}$ is invertible.

\begin{theorem}
\label{thm:mechanics-schur}
The mechanics pressure Schur complement satisfies
\begin{equation}
\label{eq:mechanics-schur}
B_uA_{u,h}^{-1}B_u^\top
=
\frac{1}{\lambda+2\mu}M_p .
\end{equation}
Consequently,
\begin{equation}
\label{eq:mechanics-capacity}
H_h
=
\bigg(
c_0+\frac{\alpha^2}{\lambda+2\mu}
\bigg)M_p .
\end{equation}
In particular, since
\(
M_p=\operatorname{diag}\bigl(|T|\bigr)_{T\in\mathcal T_h},
\)
the matrix $H_h$ is positive diagonal.
\end{theorem}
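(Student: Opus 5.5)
The plan is to compute $A_{u,h}^{-1}B_u^\top\mathbf q$ explicitly for an arbitrary pressure vector $\mathbf q$, using the isomorphism $\nabla\cdot:W_h\to P_h$ established above. Fix $q_h\in P_h$ with coefficient vector $\mathbf q$. By surjectivity and injectivity of $\nabla\cdot|_{W_h}$, there is a unique $w_h\in W_h$ with
\[
\nabla\cdot w_h=\frac{1}{\lambda+2\mu}\,q_h .
\]
Let $\mathbf w$ be its coefficient vector. The claim is that $A_{u,h}\mathbf w=B_u^\top\mathbf q$, i.e.\ $\mathbf w=A_{u,h}^{-1}B_u^\top\mathbf q$.

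To verify this, I will use \eqref{eq:mechanics-matrix}. Since $w_h\in W_h$, the equivalence $C_h^\top\mathbf w=0\Leftrightarrow w_h\in W_h$ gives $\mu C_hM_r^{-1}C_h^\top\mathbf w=0$. For the remaining term, I test against an arbitrary $\psi_j^u$:
\[
(\lambda+2\mu)(D_u\mathbf w)_j=(\lambda+2\mu)(\nabla\cdot w_h,\nabla\cdot\psi_j^u)=(q_h,\nabla\cdot\psi_j^u)=(B_u^\top\mathbf q)_j ,
\]
by the definition of $B_u$ and bilinearity. Hence $A_{u,h}\mathbf w=B_u^\top\mathbf q$. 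Since $A_{u,h}\succ0$ was shown above, this solution is the unique one.

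Next, I apply $B_u$:
\[
(B_u\mathbf w)_T=(\psi_T^p,\nabla\cdot w_h)=\frac{1}{\lambda+2\mu}(\psi_T^p,q_h)=\frac{1}{\lambda+2\mu}(M_p\mathbf q)_T .
\]
Because $\mathbf q$ is arbitrary, this gives \eqref{eq:mechanics-schur}. Substituting into the definition of $H_h$ then yields \eqref{eq:mechanics-capacity}. Positivity and diagonality follow from $M_p=\operatorname{diag}(|T|)$ together with $c_0\ge0$ and $\alpha^2/(\lambda+2\mu)>0$.

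There is no real obstacle here. The only delicate point is that the curl block is annihilated exactly, which relies on choosing $w_h$ in the $L^2$-orthogonal complement $W_h$ rather than taking an arbitrary preimage under the divergence. The existence of such a $w_h$ is precisely the exactness and surjectivity established before the theorem.
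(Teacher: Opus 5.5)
Your proposal is correct and follows essentially the same argument as the paper: you choose the unique $w_h\in W_h$ with $\nabla\cdot w_h=(\lambda+2\mu)^{-1}q_h$, use $C_h^\top\mathbf w=0$ to show $A_{u,h}\mathbf w=B_u^\top\mathbf q$, and then apply $B_u$ to obtain $(\lambda+2\mu)^{-1}M_p\mathbf q$. The only difference is that you test against basis functions $\psi_j^u$ where the paper tests against an arbitrary coefficient vector $\mathbf z$, which is equivalent.
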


\begin{proof}
Let $q_h\in P_h$ be arbitrary, and let $\mathbf q$ denote its coefficient vector in the chosen pressure basis. 
By the isomorphism of the restriction
\(
\nabla\cdot:W_h\longrightarrow P_h,
\) 
there exists a unique $w_q\in W_h$ such that
\begin{equation}
\label{eq:wq-div}
\nabla\cdot w_q
=
\frac{1}{\lambda+2\mu}q_h .
\end{equation}
Let $\mathbf w_q$ denote its coefficient vector. Since
$w_q\in W_h$, we have
\(
C_h^\top\mathbf w_q=0.
\)
For an arbitrary $ z_h\in U_h$, with coefficient vector $\mathbf z$, we then have
\[
\begin{aligned}
\mathbf z^\top A_{u,h}\mathbf w_q
&=
\mu(C_h^\top\mathbf z)^\top
M_r^{-1}(C_h^\top\mathbf w_q)
+
(\lambda+2\mu)
(\nabla\cdot w_q,\nabla\cdot z_h)
\\
&=
(\lambda+2\mu)
\bigg(\frac{1}{\lambda+2\mu}q_h,\nabla\cdot z_h\bigg)
=
(q_h,\nabla\cdot z_h)
=
\mathbf z^\top B_u^\top\mathbf q.
\end{aligned}
\]
Since $\mathbf z$ is arbitrary, we derive
\(
A_{u,h}\mathbf w_q=B_u^\top\mathbf q.
\)
By the invertibility of $A_{u,h}$, we obtain
\[
\mathbf w_q=A_{u,h}^{-1}B_u^\top\mathbf q.
\]
For each pressure basis function \(\psi_T^p\), we apply \eqref{eq:wq-div} to the definition of $B_u$.
\[
(B_u\mathbf w_q)_T
=
(\psi_T^p,\nabla\cdot w_q)
=
\frac{1}{\lambda+2\mu}(\psi_T^p,q_h)
=
\frac{1}{\lambda+2\mu}(M_p\mathbf q)_T.
\]
Therefore, we obtain
\[
B_uA_{u,h}^{-1}B_u^\top\mathbf q
=
B_u\mathbf w_q
=
\frac{1}{\lambda+2\mu}M_p\mathbf q.
\]
Since $\mathbf q$ is arbitrary, we can deduce \eqref{eq:mechanics-schur}.
Substituting this identity into
\(
H_h
=
c_0M_p+\alpha^2B_uA_{u,h}^{-1}B_u^\top
\)
gives the desired result \eqref{eq:mechanics-capacity}.
\end{proof}

\subsection{Mass-lumped Darcy operator}
\label{subsec:mass-lumped-darcy}

We next turn to the Darcy pressure operator $G_h= B_v(A_{v,h}^{L})^{-1}B_v^\top$.
By the circumcentric mass-lumped construction \eqref{eq:mass-lumped-darcy-form}, \(A_{v,h}^{L}\) is diagonal, while Assumption~\ref{ass:geometric-positivity} ensures that all its diagonal entries are positive. 
The columns of \(B_v\) are the oriented cell--facet incidence vectors described below.

For each interior facet \(F\in\mathcal F_h^i\), let
\(T^+(F)\) and \(T^-(F)\) denote its two adjacent cells, labeled such that
\[
\nu_{T^+(F),F}=\nu_F,
\qquad
\nu_{T^-(F),F}=-\nu_F.
\]
For boundary facets, we retain the notation \(T(F)\) introduced in Section \ref{subsec:discrete-model}. For each \(F\in\mathcal F_h\), define the cell--facet incidence vector \(\boldsymbol d_F\in\mathbb R^{N_p}\) by
\begin{equation}
(\boldsymbol d_F)_T
:=
\begin{cases}
  1, & T=T^+(F), \qquad F\in\mathcal F_h^{i},\\
 -1, & T=T^-(F), \qquad F\in\mathcal F_h^{i},\\
  1, & T=T(F),   \qquad \ \ F\in\mathcal F_h^\partial,\\
  0, & \text{otherwise}. 
\end{cases} 
\label{def-df}
\end{equation}
Thus, an interior facet has two incidence entries \(1\) and \(-1\), whereas a boundary facet has a single entry \(1\).
This is the standard cell--facet incidence structure \cite{alonsorodriguez2018graph}.

%The following proposition makes explicit the graph structure of the mass-lumped Darcy pressure operator.

\begin{proposition}
\label{prop:darcy-graph}
Under Assumption~\ref{ass:geometric-positivity}, the Darcy pressure operator satisfies
\begin{equation}
\label{eq:darcy-graph}
G_h
=
\sum_{F\in\mathcal F_h}
a_F^{-1}\boldsymbol d_F\boldsymbol d_F^\top .
\end{equation}
Consequently, $G_h$ is a symmetric positive-definite $Z$-matrix and
\(
G_h\one\ge \boldsymbol 0.
\)
\end{proposition}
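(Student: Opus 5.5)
The plan is to compute the columns of \(B_v\) explicitly in the canonical \(\mathrm{RT}_0\) facet basis and then expand the product \(B_v(A_{v,h}^{L})^{-1}B_v^\top\) using the diagonal structure of \(A_{v,h}^{L}\).

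\textbf{Step 1: the columns of \(B_v\).} For a cell \(T\) and a facet \(G\), the divergence theorem gives
\[
(B_v)_{TG}=\int_T\nabla\cdot\psi_G^v\,dx=\sum_{F\subset\partial T}\int_F\psi_G^v\cdot\nu_{T,F}\,ds .
\]
Since \(\nu_{T,F}=\pm\nu_F\), the duality relation \eqref{RT0-flux} leaves only the term \(F=G\). Its value is \(+1\) if \(\nu_{T,G}=\nu_G\), and \(-1\) otherwise. This sign convention matches \eqref{def-df} exactly: the entry is \(+1\) at \(T^+(G)\), \(-1\) at \(T^-(G)\), and \(+1\) at \(T(G)\) for a boundary facet, because \(\nu_G\) is chosen outward there. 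Hence the column of \(B_v\) indexed by \(G\) is \(\boldsymbol d_G\).

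\textbf{Step 2: the graph representation.} Since \(A_{v,h}^{L}=\operatorname{diag}(a_F)\) with \(a_F>0\) by Assumption~\ref{ass:geometric-positivity}, expanding the product column by column gives
\[
B_v(A_{v,h}^{L})^{-1}B_v^\top=\sum_F a_F^{-1}\boldsymbol d_F\boldsymbol d_F^\top ,
\]
which is \eqref{eq:darcy-graph}.

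\textbf{Step 3: symmetry and the \(Z\)-matrix property.} Symmetry is immediate from the representation. For the \(Z\) property, take \(S\neq T\). The only terms contributing to \((G_h)_{ST}\) come from facets whose incidence vector has nonzero entries at both \(S\) and \(T\). These are interior facets shared by \(S\) and \(T\), and each contributes \(a_F^{-1}\cdot(1)(-1)<0\). Therefore every off-diagonal entry is nonpositive.

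\textbf{Step 4: the row-sum bound.} We have
\[
G_h\one=\sum_F a_F^{-1}(\boldsymbol d_F^\top\one)\,\boldsymbol d_F .
\]
For an interior facet, \(\boldsymbol d_F^\top\one=0\). For a boundary facet, it equals \(1\), and that term contributes \(a_F^{-1}\boldsymbol d_F\ge0\). Hence \(G_h\one\ge\boldsymbol 0\).

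\textbf{Step 5: positive definiteness.} We have
\[
\mathbf q^\top G_h\mathbf q=\sum_F a_F^{-1}(\boldsymbol d_F^\top\mathbf q)^2\ge0,
\]
so \(G_h\) is positive semidefinite. Suppose equality holds. Every interior facet then forces \(q_{T^+}=q_{T^-}\), and every boundary facet forces \(q_{T(F)}=0\). The cell adjacency graph, with cells connected through shared interior facets, is connected because \(\Omega\) is connected and the mesh is conforming. So \(\mathbf q\) is constant, and since \(\partial\Omega\) contains at least one facet, that constant is zero.

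Alternatively, one can invoke the surjectivity of \(\nabla\cdot:V_h\to P_h\) from \eqref{eq:uniform-inf-sup}: it makes \(B_v^\top\) injective, and then \(G_h\succ0\) follows directly because \((A_{v,h}^{L})^{-1}\succ0\). This route avoids the graph-connectivity argument, so I would use it as the main argument and keep the connectivity argument as the intuitive explanation.

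The main point requiring care is the sign bookkeeping in Step 1: the interior orientations \(\nu_F\) are arbitrary, while on the boundary \(\nu_F\) is fixed as the outward normal, and this must be tracked to confirm that the columns of \(B_v\) coincide exactly with \(\boldsymbol d_F\).
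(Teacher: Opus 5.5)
Your proof is correct. Steps 1--4 and the connectivity argument in Step 5 reproduce the paper's proof essentially line by line.

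The one genuine departure is your preferred route to definiteness. If $B_v^\top\mathbf q=0$, then $(q_h,\nabla\cdot z_h)=0$ for all $z_h\in V_h$, so \eqref{eq:uniform-inf-sup} gives $\beta_v\lVert q_h\rVert\le0$, and $G_h\succ0$ follows from $(A_{v,h}^{L})^{-1}\succ0$. This is valid and shorter. It also does not use the diagonal structure of $A_{v,h}^{L}$, so it would equally give definiteness for the consistent product.

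The paper's combinatorial argument has different advantages. It is self-contained, needing only $a_F>0$ and the incidence vectors rather than a cited stability result. It also shows the mechanism explicitly: the constant mode is removed exactly by the boundary-facet terms $a_F^{-1}x_{T(F)}^2$ coming from the homogeneous pressure condition. These are the same terms that produce $G_h\one\ge\boldsymbol 0$.

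For this lowest-order pair the two facts are in fact equivalent. The kernel $\ker B_v^\top=\{\boldsymbol x:\boldsymbol d_F^\top\boldsymbol x=0\ \forall F\in\mathcal F_h\}$ is trivial precisely when every cell is linked to some boundary facet through a chain of interior facets. So the inf-sup route trades a short combinatorial argument for an analytic citation rather than bypassing the underlying fact.
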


\begin{proof}
Since $\psi_T^p=\chi_T$, the divergence theorem gives 
\[
(B_v)_{TF} 
=
(\psi_T^p,\nabla\cdot \psi_F^v)
=
\int_T \nabla\cdot \psi_F^v\,dx
=
\int_{\partial T} \psi_F^v \cdot \nu_T \,ds
= 
\sum_{G \subset \partial T} \int_{G} \psi_F^v \cdot \nu_{T,G} \,ds.
\]
Recalling the integral-flux normalization of the \(\mathrm{RT}_0\) facet basis from \eqref{flux-equation} and \eqref{RT0-flux},
\(
\Phi_G(\psi_F^v) = \delta_{FG},
\)
and using the orientation relation
\[
\nu_{T,G}
=
(\boldsymbol d_G)_T\,\nu_G
\qquad
\text{for }G\subset\partial T,
\]
we obtain
\[
\begin{aligned}
(B_v)_{TF}
&=
\sum_{G\subset\partial T}
(\boldsymbol d_G)_T
\int_G \psi_F^v\cdot\nu_G\,ds 
=
\sum_{G\subset\partial T}
(\boldsymbol d_G)_T
\Phi_G(\psi_F^v) 
=
(\boldsymbol d_F)_T.
\end{aligned}
\]
Hence the \(F\)-th column of \(B_v\) is precisely \(\boldsymbol d_F\).
Since
\(
A_{v,h}^{L}
=
\operatorname{diag}(a_F)_{F\in\mathcal F_h},
\)
we obtain
\begin{align*}
G_h
&=
B_v(A_{v,h}^{L})^{-1}B_v^\top
=
\sum_{F\in\mathcal F_h}
a_F^{-1}\boldsymbol d_F\boldsymbol d_F^\top, %\label{def-Gh}
\end{align*}
which proves \eqref{eq:darcy-graph}.

For any coefficient vector $\boldsymbol x=(x_T)_{T\in\mathcal T_h}\in\mathbb R^{N_p}$,
the definition of $\boldsymbol d_F$ gives
\[
\boldsymbol d_F^\top\boldsymbol x
=
\begin{cases}
x_{T^+(F)}-x_{T^-(F)},
& F\in\mathcal F_h^i,\\[1mm]
x_{T(F)},
& F\in\mathcal F_h^\partial.
\end{cases}
\]
Then, we obtain
\[
\begin{aligned}
\boldsymbol x^\top G_h\boldsymbol x
=
\sum_{F\in\mathcal F_h}
a_F^{-1}
(\boldsymbol d_F^\top\boldsymbol x)^2
=
\sum_{F\in\mathcal F_h^i}
a_F^{-1}
\bigl(x_{T^+(F)}-x_{T^-(F)}\bigr)^2
+
\sum_{F\in\mathcal F_h^\partial}
a_F^{-1}x_{T(F)}^2 .
\end{aligned}
\]
By Assumption~\ref{ass:geometric-positivity}, $a_F>0$ for every
$F\in\mathcal F_h$, and therefore
\(
\boldsymbol x^\top G_h\boldsymbol x\ge0.
\)
Thus, $G_h$ is symmetric positive semidefinite.

Moreover, if $\boldsymbol x^\top G_h\boldsymbol x=0$, then the following relations hold
\[
\begin{cases}
x_{T^+(F)}=x_{T^-(F)},
& \forall\,F\in\mathcal F_h^i,\\[1mm]
x_{T(F)}=0,
& \forall\,F\in\mathcal F_h^\partial.
\end{cases}
\]
Since the cell-adjacency graph is connected, any two cells
$T,T'\in\mathcal T_h$ can be joined by a sequence of cells
\(
T=T_0,T_1,\ldots,T_m=T',
\)
such that each pair $T_{k-1}$ and $T_k$ shares an interior facet.
The first relation yields
\[
x_{T_0}=x_{T_1}=\cdots=x_{T_m},
\]
and hence $x_T$ has the same value for all
$T\in\mathcal T_h$. On the other hand, for any boundary facet
$F\in\mathcal F_h^\partial$, the second relation gives
\(
x_{T(F)}=0.
\)
Thus, the common cell value must be zero, so that
$\boldsymbol x= \boldsymbol0$ and $G_h$ is positive definite.

For two distinct cells \(T,T'\in\mathcal T_h\), only an interior facet
shared by \(T\) and \(T'\) can contribute to the off-diagonal entry
\((G_h)_{TT'}\). Thus, by the definitions \eqref{def-df} and \eqref{eq:darcy-graph}
\[
(G_h)_{TT'}
=
\begin{cases}
-a_F^{-1},
& \text{if } T \text{ and } T' \text{ share } F\in\mathcal F_h^i,\\[1mm]
0,
& \text{otherwise}.
\end{cases}
\]
Since $a_F>0$, all off-diagonal entries of $G_h$ are nonpositive. Hence, $G_h$ is a $Z$-matrix.

For an interior facet $F\in\mathcal F_h^i$, the two nonzero components of $\boldsymbol d_F$ have opposite signs, and hence
\(
\boldsymbol d_F^\top\one=0.
\)
For a boundary facet $F\in\mathcal F_h^\partial$, the vector
$\boldsymbol d_F$ has a single nonzero component equal to $1$ at the cell $T(F)$, and hence
\(
\boldsymbol d_F^\top\one = 1.
\)
Consequently, for every $T\in\mathcal T_h$, we arrive at
\[
(G_h\one)_T
=
\left(\sum_{F\in\mathcal F_h}
a_F^{-1}\boldsymbol d_F
(\boldsymbol d_F^\top\one)
\right)_T
= 
\sum_{\substack{
F\in\mathcal F_h^\partial\\
F\subset\partial T}}
a_F^{-1}
\ge0.
\]
Hence
\(
G_h\one\ge \boldsymbol 0.
\) 
The proof is complete.
\end{proof}

\subsection{Pressure propagation}
\label{subsec:pressure-propagation}

Set \(\ell^n=0\) in \eqref{eq:pressure-propagation} to isolate the homogeneous pressure evolution from external loading and source terms:
\begin{equation}
\label{eq:pressure-step}
K_h\mathbf p^n=H_h\mathbf p^{n-1},
\qquad
\mathbf p^n=S_h\mathbf p^{n-1},
\qquad
S_h:=K_h^{-1}H_h.
\end{equation}

Since \(P_h=\mathrm{DG}_0\), the coefficients \(p_T^n\) are the cellwise pressure values. The prescribed boundary pressure is zero.
Accordingly, the relevant comparison interval is determined by the cellwise pressure values at the previous time level together with the boundary value zero. Define
\[
    p_{\min}^{n-1}
    :=
    \min\big\{
        0,\,
        \min_{T\in\mathcal T_h} p_T^{n-1}
    \big\},
    \qquad
    p_{\max}^{n-1}
    :=
    \max\big\{
        0,\,
        \max_{T\in\mathcal T_h} p_T^{n-1}
    \big\}.
\]
The cellwise discrete maximum principle takes the form
\begin{equation}
\label{eq:cellwise-dmp}
    p_{\min}^{n-1}
    \le
    p_T^n
    \le
    p_{\max}^{n-1}
    \qquad
    \forall T\in\mathcal T_h .
\end{equation}
Thus, the pressure update cannot create a new cellwise overshoot or undershoot outside the range determined by the previous pressure values and the prescribed boundary value zero. To establish \eqref{eq:cellwise-dmp}, it is sufficient to show that the pressure propagator \(S_h\) is entrywise nonnegative and row-substochastic. 
The following theorem derives these properties from the algebraic structure of \(H_h\) and \(G_h\).

\begin{theorem}[Pressure monotonicity]
\label{thm:monotone-pressure-propagation}
Under Assumption~\ref{ass:geometric-positivity}, the matrix
\(K_h\)
is a Stieltjes matrix for every \(\tau>0\). Moreover, the one-step
pressure propagator satisfies
\[
    S_h=K_h^{-1}H_h\ge0,
\]
and
\[
    \boldsymbol{0}\le S_h\one\le\one.
\]
Consequently, \(S_h\) is row-substochastic.
\end{theorem}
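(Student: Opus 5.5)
The plan is to combine Theorem~\ref{thm:mechanics-schur} and Proposition~\ref{prop:darcy-graph} into the Stieltjes structure of $K_h=H_h+\tau G_h$. After that, nonnegativity of $S_h$ follows from Lemma~\ref{lem:stieltjes}, and the row-sum bound from a comparison argument applied to $K_h$.

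\textbf{Stieltjes property.} By Theorem~\ref{thm:mechanics-schur}, $H_h=\gamma M_p$ with $\gamma:=c_0+\alpha^2/(\lambda+2\mu)>0$. Thus $H_h$ is a positive diagonal matrix: it is symmetric positive definite and has zero off-diagonal entries. By Proposition~\ref{prop:darcy-graph}, $G_h$ is a symmetric positive-definite $Z$-matrix. For every $\tau>0$, the sum $K_h=H_h+\tau G_h$ is therefore symmetric, positive definite (a sum of positive definite matrices), and its off-diagonal entries equal $\tau(G_h)_{TT'}\le0$. Hence $K_h$ is a symmetric positive-definite $Z$-matrix. Lemma~\ref{lem:stieltjes} then shows that $K_h$ is Stieltjes with $K_h^{-1}\ge0$. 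Since $H_h\ge0$ entrywise, the same lemma gives $S_h=K_h^{-1}H_h\ge0$.

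\textbf{Row sums.} The lower bound $S_h\one\ge\boldsymbol 0$ follows directly from $S_h\ge0$. For the upper bound, the key identity is
\[
\one-S_h\one=K_h^{-1}\bigl(K_h\one-H_h\one\bigr)=\tau K_h^{-1}G_h\one .
\]
Proposition~\ref{prop:darcy-graph} gives $G_h\one\ge\boldsymbol 0$, and $K_h^{-1}\ge0$, so $\tau K_h^{-1}G_h\one\ge\boldsymbol 0$. This yields $S_h\one\le\one$. The two bounds together with $S_h\ge0$ are exactly the definition of row-substochastic.

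The only genuinely nontrivial ingredients are the exact diagonal form of $H_h$ and the $Z$-structure and positive definiteness of $G_h$, both established earlier. The main point requiring care is therefore just the observation that adding a positive diagonal matrix preserves both the $Z$-sign pattern and positive definiteness uniformly in $\tau$. This is what yields the result for every $\tau>0$ with no time-step restriction. If $c_0=0$, positivity still holds because $\alpha^2/(\lambda+2\mu)>0$.
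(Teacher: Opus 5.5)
Your proposal is correct and follows essentially the same route as the paper. You obtain the Stieltjes property of $K_h=H_h+\tau G_h$ from the positive diagonal $H_h$ and the $Z$-matrix $G_h$, then nonnegativity of $S_h$ from Lemma~\ref{lem:stieltjes}, and finally the row-sum bound by applying $K_h^{-1}\ge0$ to $K_h\one-H_h\one=\tau G_h\one\ge\boldsymbol 0$, which is the paper's comparison $K_h\one\ge H_h\one$ written as an identity.
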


\begin{proof}
By Theorem~\ref{thm:mechanics-schur}, \(H_h\) is a positive diagonal matrix, while Proposition~\ref{prop:darcy-graph} shows that \(G_h\) is a symmetric positive-definite \(Z\)-matrix. It follows that
\[
    K_h=H_h+\tau G_h
\]
is symmetric positive definite for every \(\tau>0\). 

Moreover, since \(H_h\) is diagonal and \(G_h\) has nonpositive off-diagonal entries, \(K_h\) is also a \(Z\)-matrix. Hence \(K_h\) is a Stieltjes matrix.
By Lemma~\ref{lem:stieltjes}, we have
\[
    K_h^{-1}\ge0.
\]
Since \(H_h\) is positive diagonal, we immediately obtain
\[
    S_h
    =
    K_h^{-1}H_h
    \ge0.
\]

For the row-sum bound, Proposition~\ref{prop:darcy-graph} gives
\(
    G_h\one\ge0.
\)
Therefore,
\[
    K_h\one
    =
    H_h\one+\tau G_h\one
    \ge
    H_h\one.
\]
Since \(K_h^{-1}\ge0\), the order is preserved.
\[
    \one
    =
    K_h^{-1}K_h\one
    \ge
    K_h^{-1}H_h\one
    =
    S_h\one.
\]
On the other hand, \(S_h\ge0\) implies
\(
    S_h\one\ge \boldsymbol{0}.
\)
Thus,
\[
    \boldsymbol{0}\le S_h\one\le\one,
\]
which proves that \(S_h\) is row-substochastic.
\end{proof}

Theorem \ref{thm:monotone-pressure-propagation} immediately yields the desired cellwise discrete maximum principle.

\begin{corollary}[Discrete maximum principle]
\label{cor:cellwise-dmp}
Let \(\mathbf p^n\) be determined by the homogeneous pressure update
\eqref{eq:pressure-step}. Then \eqref{eq:cellwise-dmp} holds.
\end{corollary}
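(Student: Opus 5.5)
The corollary follows directly from Theorem~\ref{thm:monotone-pressure-propagation}. We write the homogeneous update as $\mathbf p^n=S_h\mathbf p^{n-1}$. Then we bound each component using entrywise nonnegativity and row-substochasticity of $S_h$, together with the sign of the extremal values $p_{\min}^{n-1}\le 0\le p_{\max}^{n-1}$.

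For the upper bound, fix $T\in\mathcal T_h$. Since $(S_h)_{TT'}\ge0$ for all $T'$ and $p_{T'}^{n-1}\le p_{\max}^{n-1}$ componentwise, we get
\[
p_T^n=\sum_{T'}(S_h)_{TT'}p_{T'}^{n-1}\le p_{\max}^{n-1}\sum_{T'}(S_h)_{TT'}=p_{\max}^{n-1}(S_h\one)_T .
\]
Because $p_{\max}^{n-1}\ge0$ by its definition (the maximum includes the boundary value zero) and $0\le (S_h\one)_T\le1$, this gives $p_{\max}^{n-1}(S_h\one)_T\le p_{\max}^{n-1}$.

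The lower bound is symmetric. Nonnegativity gives $p_T^n\ge p_{\min}^{n-1}(S_h\one)_T$. Since $p_{\min}^{n-1}\le0$ and $(S_h\one)_T\le1$, multiplying $(S_h\one)_T\le 1$ by the nonpositive number $p_{\min}^{n-1}$ gives $p_{\min}^{n-1}(S_h\one)_T\ge p_{\min}^{n-1}$. Combining the two bounds yields \eqref{eq:cellwise-dmp} for every $T$.

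The only delicate point is the role of the zero in the definitions of $p_{\min}^{n-1}$ and $p_{\max}^{n-1}$. Because $S_h$ is merely substochastic rather than stochastic, the row-sum deficit $1-(S_h\one)_T$ effectively weights the boundary value $0$. The bounds therefore hold only because the comparison interval contains zero, which is exactly how the sign conditions $p_{\min}^{n-1}\le0\le p_{\max}^{n-1}$ are used above.
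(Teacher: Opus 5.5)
Your proof is correct and is essentially the paper's argument, written row by row rather than with vector inequalities. Both use $S_h\ge0$ to carry the bounds $p_{\min}^{n-1}\one\le\mathbf p^{n-1}\le p_{\max}^{n-1}\one$ through $S_h$, and then the signs $p_{\min}^{n-1}\le0\le p_{\max}^{n-1}$ together with $S_h\one\le\one$ to remove the factor $S_h\one$.
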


\begin{proof}
By definition of \(p_{\min}^{n-1}\) and \(p_{\max}^{n-1}\), we have
\[
    p_{\min}^{n-1}\one
    \le
    \mathbf p^{n-1}
    \le
    p_{\max}^{n-1}\one.
\]
Since \(S_h\ge0\), multiplication by \(S_h\) preserves the
componentwise inequalities:
\[
    p_{\min}^{n-1}S_h\one
    \le
    S_h\mathbf p^{n-1}
    \le
    p_{\max}^{n-1}S_h\one.
\]
Using
\[
    p_{\min}^{n-1}\le0,
    \qquad
    p_{\max}^{n-1}\ge0,
    \qquad
    \boldsymbol{0} \le S_h\one\le\one,
\]
we further obtain
\[
    p_{\min}^{n-1}\one
    \le
    p_{\min}^{n-1}S_h\one,
    \qquad
    p_{\max}^{n-1}S_h\one
    \le
    p_{\max}^{n-1}\one.
\]
Combining these inequalities with
\[
    \mathbf p^n = S_h\mathbf p^{n-1}
    \quad\Longrightarrow\quad
    p_{\min}^{n-1}\one
    \le \mathbf p^n
    \le p_{\max}^{n-1}\one.
\]
This is precisely the discrete maximum principle
\eqref{eq:cellwise-dmp}.
\end{proof}

\begin{remark}
The positivity property extends directly to the full pressure update
\eqref{eq:pressure-propagation}. If
\(\boldsymbol p^{n-1}\ge0\) and \(\boldsymbol\ell^n\ge0\), then
\(
  \boldsymbol p^n
  =
  S_h\boldsymbol p^{n-1}
  +
  K_h^{-1}\boldsymbol\ell^n
  \ge0,
\)
because \(S_h\ge0\) and \(K_h^{-1}\ge0\).
In particular, if the mechanics load is time independent, then \(\boldsymbol\ell^n=\tau\boldsymbol s^n\), and hence a nonnegative source preserves nonnegative cellwise pressures.
\end{remark}

\section{Convergence analysis}
\label{sec:convergence}

This section establishes the error estimate for the numerical approximation \eqref{eq:discrete-weak-form}. We continue to use the quasi-uniform mesh family, discrete spaces, and structural assumptions introduced above. 

\subsection{Coupled projection}
\label{subsec:coupled-projection}

We define a spatial projection associated with the four-field Biot formulation, combining the mass-lumped mixed Darcy approximation \cite{brezzi2006error,hu2017nonconforming} with the rotation--displacement mechanics formulation \cite{boon2023mixed}.
For each fixed $t\in[0,T_f]$, let
\(
(\bar r_h,\bar{u}_h,
 \bar{v}_h,\bar p_h)
\in
R_h\times U_h\times V_h\times P_h
\)
be defined by
\begin{subequations}
\label{eq:coupled-projection}
\begin{align}
\mu^{-1}(\bar r_h,\rho_h)
-
(\bar{u}_h,\nabla\times \rho_h)
=
\mu^{-1}(r,\rho_h)
-
(u,\nabla\times \rho_h), &
\label{eq:coupled-projection-mech1}
\\
(\nabla\times\bar r_h,w_h)
+
(\lambda+2\mu)
(\nabla\cdot\bar{u}_h,
 \nabla\cdot w_h)
-
\alpha(\bar p_h,\nabla\cdot w_h) &
\nonumber \\
=
(\nabla\times r,w_h)
+
(\lambda+2\mu)
(\nabla\cdot u,
 \nabla\cdot w_h)
-
\alpha(p,\nabla\cdot w_h), &
\label{eq:coupled-projection-mech2}
\\
a_{v,h}^{L}(\bar{v}_h, z_h)
-
(\bar p_h,\nabla\cdot z_h)
=
K^{-1}(v, z_h)
-
(p,\nabla\cdot z_h), &
\label{eq:coupled-projection-darcy1}
\\
(\nabla\cdot\bar{v}_h,q_h)
=
(\nabla\cdot v,q_h), &
\label{eq:coupled-projection-darcy2}
\end{align}
\end{subequations}
for all \(
(\rho_h,w_h,
 z_h,q_h)
\in
R_h\times U_h\times V_h\times P_h
\). 
For simplicity, we assume sufficient spatial regularity, in particular \(u\in H^2(\Omega)\) and \(r,\nabla \times r,v,p\in H^1(\Omega)\).
The projection is well posed by the stability of the two subproblems. The Darcy equations first determine \((\bar v_h,\bar p_h)\), after which the mechanics equations determine \((\bar r_h,\bar u_h)\).

For the Darcy variables $(\bar{\boldsymbol v}_h,\bar p_h)\in V_h\times P_h$,
the geometric condition~\eqref{eq:uniform-well-centered} yields the mesh-uniform stability of the circumcentric lumped Darcy product established in Lemma~\ref{lem:lumped-equivalence}.
Under these geometric assumptions, the error analysis of the mass-lumped mixed Darcy approximation in \cite{brezzi2006error,hu2017nonconforming}, together with the standard stability of the $\mathrm{RT}_0$--$\mathrm{DG}_0$ pair, gives
%For the Darcy variables \((\bar v_h,\bar p_h)\in V_h\times P_h\), the stability of the mixed \(\mathrm{RT}_0\)--\(\mathrm{DG}_0\) pair, together with the consistency and error analysis of the circumcentric mass-lumped mixed Darcy approximation in \cite{brezzi2006error,hu2017nonconforming}, gives
\begin{equation}
\label{eq:darcy-projection-L2-estimate}
  \lVert v-\bar v_h\rVert
  +
  \lVert p-\bar p_h\rVert
  \le
  Ch (
    \lVert v\rVert_{H^1(\Omega)}
    +
    \lVert p\rVert_{H^1(\Omega)}
   ).
\end{equation}
%Let \(\Pi_h^p:L^2(\Omega)\to P_h\) denote the cellwise \(L^2\)-orthogonal projection. 
%Since \(\nabla\cdot\bar v_h\in P_h\), the second Darcy projection equation implies
%\(
%  \nabla\cdot\bar v_h
%  =
%  \Pi_h^p(\nabla\cdot v).
%\)
%Hence, %for \(v\in H^2(\Omega)\),
%\begin{equation}
%\label{eq:darcy-projection-div-estimate}
%  \lVert\nabla\cdot(v-\bar v_h)\rVert
%  \le
%  Ch\lVert v\rVert_{H^2(\Omega)}.
%\end{equation}
%Combining
%\eqref{eq:darcy-projection-L2-estimate} and
%\eqref{eq:darcy-projection-div-estimate} yields
%\begin{equation}
%\label{eq:darcy-projection-estimate}
%  \lVert v-\bar v_h\rVert_{H(\operatorname{div};\Omega)}
%  +
%  \lVert p-\bar p_h\rVert
%  \le
%  Ch (
%    \lVert v\rVert_{H^2(\Omega)}
%    +
%    \lVert p\rVert_{H^1(\Omega)}
%  ).
%\end{equation}

For the mechanics projection
\((\bar r_h,\bar u_h)\in R_h\times U_h\), subtracting the defining
equations shows that the projection error is driven only by
\(p-\bar p_h\). The uniform stability of the compatible
rotation--displacement pair \cite{boon2023mixed} therefore gives
\begin{align*}
&\lVert r-\bar r_h\rVert_{H(\nabla\times;\Omega)}
+\lVert u-\bar u_h\rVert_{H(\nabla\cdot;\Omega)}
\\
& \le
C\bigg(
  \inf_{\rho_h\in R_h}
  \lVert r-\rho_h\rVert_{H(\nabla\times;\Omega)}
  +
  \inf_{w_h\in U_h}
  \lVert u-w_h\rVert_{H(\nabla\cdot;\Omega)}
\bigg)
+
C\lVert p-\bar p_h\rVert.
\end{align*}
Combining the standard interpolation estimates with \eqref{eq:darcy-projection-L2-estimate} yields
\begin{align}
\label{eq:mechanics-projection-estimate}
&\lVert r-\bar r_h\rVert_{H(\nabla\times;\Omega)}
+\lVert u-\bar u_h\rVert_{H(\nabla\cdot;\Omega)}
\notag \\
& \le
Ch\big(
  \lVert r\rVert_{H^1(\Omega)}
  +
  \lVert \nabla \times r\rVert_{H^1(\Omega)}
  +
  \lVert u\rVert_{H^2(\Omega)}
  +
  \lVert v\rVert_{H^1(\Omega)}
  +
  \lVert p\rVert_{H^1(\Omega)}
\big).
\end{align}

\subsection{Approximation estimates}

The total error is decomposed into the projection error and the discrete error.
\[
  z(t_n)-z_h^n
  =
  \bigl(z(t_n)-\bar z_h(t_n)\bigr)
  +
  \bigl(\bar z_h(t_n)-z_h^n\bigr)
  =:\eta_z^n+\theta_z^n,
\]
for \(z\in\{r,u,v,p\}\).
The initial data are chosen by the coupled projection to satisfy the discrete mechanics and Darcy constraints.
\begin{equation}
\label{eq:projected-initialization}
(r_h^0,u_h^0,v_h^0,p_h^0) = (\bar{r}_h(0),\bar{u}_h(0),\bar{v}_h(0),\bar p_h(0)).
\end{equation}
The residuals arising from the spatial projection and the backward Euler time discretization are defined by
\begin{align*}
  R_p^n
  &:=
  \delta_t \bar p_h^n -\partial_t p(t_n),
  %\label{eq:pressure-residual}
  \qquad
  R_u^n
  :=
  \nabla\cdot \delta_t\bar{u}_h^n 
  -\nabla\cdot\partial_t u(t_n).
  %\label{eq:displacement-residual}
\end{align*}
Following the temporal-residual argument in
\cite{hu2017nonconforming}, and applying the projection
estimates to the time derivatives, we obtain
\begin{align}
\tau\sum_{k=1}^{n}
\left(
  \lVert R_p^k\rVert^2
 +\lVert R_u^k\rVert^2
\right)
&\le
Ch^2\int_0^{t_n}
\Bigl(
  \lVert\partial_t r\rVert_{H^1(\Omega)}^2
 +\lVert\nabla\times\partial_t r\rVert_{H^1(\Omega)}^2
 +\lVert\partial_t u\rVert_{H^2(\Omega)}^2
\notag\\
&\qquad\qquad
 +\lVert\partial_t v\rVert_{H^1(\Omega)}^2
 +\lVert\partial_t p\rVert_{H^1(\Omega)}^2
\Bigr)\,dt
\notag\\
&\quad
+C\tau^2\int_0^{t_n}
\left(
  \lVert\partial_{tt}p\rVert^2
 +\lVert\nabla\cdot\partial_{tt}u\rVert^2
\right)\,dt .
\label{eq:error-residual-estimate}
\end{align}

\begin{lemma}[Discrete error estimates]
\label{lem:reduced-discrete-error} 
Assume the projected initialization \eqref{eq:projected-initialization}
and the regularity required for \eqref{eq:error-residual-estimate}.
For \(0\le n\le N\), the discrete mechanics errors satisfy
\begin{equation}
\label{eq:discrete-mechanics-reduction}
  \theta_r^n= 0,
  \qquad
  \nabla\cdot\theta_u^n
  =
  \frac{\alpha}{\lambda+2\mu}\theta_p^n,
  \qquad
  \lVert\theta_u^n\rVert_{H(\nabla\cdot;\Omega)}
  \le C\lVert\theta_p^n\rVert.
\end{equation}
Moreover, for \(1\le n\le N\), 
\begin{align}
& \left(
  c_0+\frac{\alpha^2}{\lambda+2\mu}
\right)\max_{0\le k\le n}
\lVert\theta_p^k\rVert^2
+
\tau\sum_{k=1}^{n}
a_{v,h}^{L}
(\theta_v^k,\theta_v^k)
\le C(h^2+\tau^2),
\label{eq:discrete-error-energy}
\\
& \max_{0\le k \le n} a_{v,h}^{L}(\theta_v^k,\theta_v^k)
+
\left( c_0+\frac{\alpha^2}{\lambda+2\mu} \right)
\tau\sum_{k=1}^{n}   \lVert\delta_t\theta_p^k\rVert^2
  \le C(h^2+\tau^2).
  \label{eq:discrete-flux-energy}
\end{align}
Here, \(C\) is independent of \(h\) and \(\tau\).
\end{lemma}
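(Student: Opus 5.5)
The proof has two parts: an exact mechanics reduction, then two energy estimates for the reduced Darcy system.

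\textbf{Error equations.} First we derive the error equations. Subtracting the scheme \eqref{eq:discrete-weak-form} from the coupled projection \eqref{eq:coupled-projection} evaluated at $t_n$, and using the continuous equations \eqref{eq:weak-form}, gives a discrete system for $(\theta_r^n,\theta_u^n,\theta_v^n,\theta_p^n)$. The mechanics equations are homogeneous and contain no time derivative:
\[
\mu^{-1}(\theta_r^n,\rho_h)-(\theta_u^n,\nabla\times\rho_h)=0,
\]
\[
(\nabla\times\theta_r^n,w_h)+(\lambda+2\mu)(\nabla\cdot\theta_u^n,\nabla\cdot w_h)-\alpha(\theta_p^n,\nabla\cdot w_h)=0.
\]
The Darcy equation is $a_{v,h}^L(\theta_v^n,z_h)-(\theta_p^n,\nabla\cdot z_h)=0$. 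The mass equation, by \eqref{eq:coupled-projection-darcy2}, is
\[
c_0(\delta_t\theta_p^n,q_h)+\alpha(\nabla\cdot\delta_t\theta_u^n,q_h)+(\nabla\cdot\theta_v^n,q_h)=-(c_0R_p^n+\alpha R_u^n,q_h).
\]

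\textbf{Mechanics reduction \eqref{eq:discrete-mechanics-reduction}.} This repeats the argument of Theorem~\ref{thm:mechanics-schur}. Let $w\in W_h$ be the unique element with $\nabla\cdot w=\frac{\alpha}{\lambda+2\mu}\theta_p^n$. Then $(0,w)$ solves the mechanics error system, since $C_h^\top\mathbf w=0$. By invertibility of $A_{u,h}$ and the uniqueness of the rotation determined by $u$ (via $M_r$), we get $\theta_u^n=w$ and $\theta_r^n=0$. For the bound, the inf-sup condition \eqref{eq:uniform-inf-sup} restricted to $W_h$ gives $\|w\|\le\beta_u^{-1}\|\nabla\cdot w\|$. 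Indeed, take $w_h$ attaining the sup and project it onto $W_h$ in $L^2$; this leaves the divergence unchanged and does not increase the norm. This gives the $H(\nabla\cdot)$ bound. The identity holds for every $n$, including $n=0$ by \eqref{eq:projected-initialization}, where $\theta^0=0$.

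\textbf{Energy estimate \eqref{eq:discrete-error-energy}.} Substituting the reduction into the mass equation turns its first two terms into $\kappa(\delta_t\theta_p^n,q_h)$ with $\kappa:=c_0+\alpha^2/(\lambda+2\mu)>0$. Test the mass equation with $q_h=\theta_p^n$ and the Darcy equation with $z_h=\theta_v^n$, then add. The coupling terms cancel, leaving
\[
\kappa(\delta_t\theta_p^n,\theta_p^n)+a_{v,h}^L(\theta_v^n,\theta_v^n)=-(c_0R_p^n+\alpha R_u^n,\theta_p^n).
\]
Use $2(a-b,a)\ge\|a\|^2-\|b\|^2$ on the left, and on the right use Cauchy--Schwarz with Young's inequality $\le \frac{C}{\kappa}\|R^n\|^2+\frac{\kappa}{4T_f}\|\theta_p^n\|^2$. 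Multiply by $\tau$ and sum. Taking the maximum over $k$ absorbs the $\frac{\kappa}{4T_f}\|\theta_p\|^2$ term, since $\tau\sum\le T_f\max$. Then \eqref{eq:error-residual-estimate} together with $\theta_p^0=0$ gives the $C(h^2+\tau^2)$ bound, and a discrete Gronwall inequality is not even needed. One caveat: if $c_0=0$, then $\kappa=\alpha^2/(\lambda+2\mu)$, which is still positive, so the argument is fine.

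\textbf{Flux estimate \eqref{eq:discrete-flux-energy}.} Apply $\delta_t$ to the Darcy error equation. Since the equation holds at every level, $a_{v,h}^L(\delta_t\theta_v^n,z_h)=(\delta_t\theta_p^n,\nabla\cdot z_h)$. Test this with $z_h=\theta_v^n$, and test the mass equation with $q_h=\delta_t\theta_p^n$. Adding gives
\[
\kappa\|\delta_t\theta_p^n\|^2+a_{v,h}^L(\delta_t\theta_v^n,\theta_v^n)=-(c_0R_p^n+\alpha R_u^n,\delta_t\theta_p^n).
\]
The second term on the left is at least $\frac{1}{2\tau}\bigl(a(\theta_v^n,\theta_v^n)-a(\theta_v^{n-1},\theta_v^{n-1})\bigr)$. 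Young's inequality absorbs half of $\kappa\|\delta_t\theta_p^n\|^2$. Summing and using $\theta_v^0=0$ gives the result.

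\textbf{Main obstacle.} The delicate step is the exact reduction. It requires checking that the error mechanics system genuinely has zero right-hand side, so that the projection errors $\eta$ cancel exactly against the projection definition. It also requires that $\theta_r^n=0$ follows from uniqueness. Everything after that is standard.
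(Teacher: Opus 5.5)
Your proposal is correct and follows the paper's skeleton. It uses the same homogeneous mechanics error system, the same reduced Darcy system with the same test functions, and a proof of \eqref{eq:discrete-flux-energy} identical to the paper's. Two steps are organized differently.

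For the mechanics reduction, the paper eliminates the rotation and applies \eqref{eq:mechanics-schur} to $A_{u,h}[\theta_u^n]=\alpha B_u^\top[\theta_p^n]$ to read off the divergence identity. Only then does it obtain $\theta_r^n=0$, by testing with $w_h=\nabla\times\theta_r^n$ and $\rho_h=\theta_r^n$. You instead exhibit $(0,w)$ with $w\in W_h$ as a solution and invoke unique solvability. This is shorter and amounts to rerunning the proof of Theorem~\ref{thm:mechanics-schur} with data $\alpha\theta_p^n$. Your projection remark only shows that the inf--sup condition survives restriction to $W_h$. Passing to $\lVert w\rVert_{H(\nabla\cdot;\Omega)}\le\beta_u^{-1}\lVert\nabla\cdot w\rVert$ still uses the standard fact, also invoked by the paper, that the bijection $\nabla\cdot|_{W_h}$ then has inverse bounded by $\beta_u^{-1}$.

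For \eqref{eq:discrete-error-energy}, the paper uses the Darcy inf--sup condition and Lemma~\ref{lem:lumped-equivalence} to get $\lVert\theta_p^n\rVert\le C\,a_{v,h}^{L}(\theta_v^n,\theta_v^n)^{1/2}$, and absorbs the residual into the flux term. You instead absorb it into $\kappa\max_k\lVert\theta_p^k\rVert^2$, with $\kappa=c_0+\alpha^2/(\lambda+2\mu)$, via $\tau\sum_{k=1}^{m}\lVert\theta_p^k\rVert^2\le T_f\max_{k\le n}\lVert\theta_p^k\rVert^2$. Your route needs neither the Darcy inf--sup nor the norm equivalence. However, it introduces a factor $T_f/\kappa$, which blows up for $c_0=0$, $\lambda\to\infty$, while the paper's absorption step introduces no $\kappa^{-1}$. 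Since the lemma only asks for independence of $h$ and $\tau$, both suffice.

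Finally, the sign of the residual in your mass error equation is off. With $R_p^n=\delta_t\bar p_h^n-\partial_t p(t_n)$, the right-hand side is $+c_0(R_p^n,q_h)+\alpha(R_u^n,q_h)$. This is harmless, since only its norm enters.
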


\begin{proof}
%For \(n=0\), the assertions follow from \eqref{eq:projected-initialization}. 
%Let \(1\le n\le N\). 
For \(n=0\), \eqref{eq:discrete-mechanics-reduction} follows from the initial condition in \eqref{eq:projected-initialization}. 
For \(1\le n\le N\), subtracting the mechanics projection equations \eqref{eq:coupled-projection-mech1}--\eqref{eq:coupled-projection-mech2} from the corresponding mechanics equations at \(t=t_n\) gives
\begin{subequations}
\begin{align}
  \mu^{-1}(\theta_r^n,\rho_h)
  -(\theta_u^n,\nabla\times\rho_h)
  &=0,
  &&\forall\,\rho_h\in R_h, \label{eq:diff-projection1}
  \\
  (\nabla\times\theta_r^n,w_h)
  +(\lambda+2\mu)
  (\nabla\cdot\theta_u^n,
   \nabla\cdot w_h)
  &=
  \alpha(\theta_p^n,\nabla\cdot w_h),
  &&\forall\, w_h\in U_h.
\label{eq:diff-projection2}
\end{align}
\end{subequations}
Let \([\theta_r^n]\), \([\theta_u^n]\), and \([\theta_p^n]\) denote the corresponding coefficient vectors. From \eqref{eq:fully-discrete-matrix}, the two mechanic error equations are equivalently
\[
\mu^{-1}M_r[\theta_r^n]-C_h^\top[\theta_u^n]=0,
\qquad
C_h[\theta_r^n]+(\lambda+2\mu)D_u[\theta_u^n]
=
\alpha B_u^\top[\theta_p^n].
\]
Eliminating the rotation gives
\[
  A_{u,h}[\theta_u^n]
  =
  \alpha B_u^\top[\theta_p^n].
\]
Hence, the exact mechanics Schur identity
\eqref{eq:mechanics-schur} yields
\[
\begin{aligned}
  B_u[\theta_u^n]
  &=
  \alpha B_uA_{u,h}^{-1}B_u^\top[\theta_p^n]
  =
  \frac{\alpha}{\lambda+2\mu}M_p[\theta_p^n].
\end{aligned}
\]
By the definitions of \(B_u\) and \(M_p\), this coordinate identity is
equivalent to
\[
  \left(
    \nabla\cdot\theta_u^n
    -\frac{\alpha}{\lambda+2\mu}\theta_p^n,
    q_h
  \right)=0
  \qquad\forall\,q_h\in P_h.
\]
Since both terms belong to \(P_h\),
\begin{align}
  \nabla\cdot\theta_u^n
  =
  \frac{\alpha}{\lambda+2\mu}\theta_p^n.
  \label{divergence-identity}
\end{align}

Substituting the divergence identity \eqref{divergence-identity} into \eqref{eq:diff-projection2} gives
\[
  (\nabla\times\theta_r^n, w_h)=0
  \qquad\forall\, w_h\in U_h.
\]
Since \(\nabla\times R_h\subset U_h\), we take \(w_h=\nabla\times\theta_r^n\) to obtain
\(
\nabla\times\theta_r^n = 0.
\)
Taking \(\rho_h=\theta_r^n\) in \eqref{eq:diff-projection1} gives
\(
  \mu^{-1}\lVert\theta_r^n\rVert^2=0,
\)
which implies 
\begin{align}
    \theta_r^n= 0.
\end{align}

The first mechanics error equation \eqref{eq:diff-projection1} therefore implies
\[
(\theta_u^n,\nabla\times\rho_h)=0
\qquad\forall\,\rho_h\in R_h,
\]
leading to \(\theta_u^n\in W_h\). 
As \(\ker(\nabla\cdot|_{U_h})=\nabla\times R_h\) stated in Subsection~\ref{subsec:mechanics-capacity}, the \(L^2\)-orthogonal complement \(W_h\) also coincides with the \(H(\nabla\cdot;\Omega)\)-orthogonal complement of \(\ker(\nabla\cdot|_{U_h})\).
Hence, the uniform inf--sup condition \eqref{eq:uniform-inf-sup}, together with the standard characterization of the inf--sup constant on the orthogonal complement of the kernel \cite{duran2008mixed,arnold2010finite}, gives
\[
\|\theta_u^n\|_{H(\nabla\cdot;\Omega)}
\le
\beta_u^{-1}
\|\nabla\cdot\theta_u^n\|.
\]
Combining this with \eqref{divergence-identity} yields the last inequality
in \eqref{eq:discrete-mechanics-reduction}.

Next, combining the continuous and discrete Darcy and mass-balance
equations at \(t=t_n\) with
\eqref{eq:coupled-projection-darcy1}--\eqref{eq:coupled-projection-darcy2},
and using \eqref{eq:discrete-mechanics-reduction}, we obtain
\begin{subequations}
\label{eq:reduced-error-system}
\begin{align}
a_{v,h}^{L}(\theta_v^n, z_h)
-(\theta_p^n,\nabla\cdot z_h)
&=0,
\label{eq:reduced-error-darcy}
\\
\left(
c_0+\frac{\alpha^2}{\lambda+2\mu}
\right)
(\delta_t\theta_p^n,q_h)
+
(\nabla\cdot\theta_v^n,q_h)
&=
c_0(R_p^n,q_h)
+
\alpha(R_u^n,q_h),
\label{eq:reduced-error-mass}
\end{align}
\end{subequations}
Taking $z_h=\theta_v^n$ and $q_h=\theta_p^n$ in \eqref{eq:reduced-error-system} and summing the resulting equations gives
\begin{align}
\left(
  c_0+\frac{\alpha^2}{\lambda+2\mu}
\right)
(\delta_t\theta_p^n,\theta_p^n)
+
a_{v,h}^{L}
(\theta_v^n,\theta_v^n)
=
c_0(R_p^n,\theta_p^n)
+\alpha(R_u^n,\theta_p^n).
\label{preceding-estimate1}
\end{align}
We then apply the inf--sup condition \eqref{eq:uniform-inf-sup} and
\eqref{eq:reduced-error-darcy} to obtain
\[
\begin{aligned}
\beta_v\lVert \theta_p^n\rVert
\le
\sup_{0\ne z_h\in V_h}
\frac{(\theta_p^n,\nabla\cdot z_h)}
     {\lVert z_h\rVert_{H(\nabla\cdot;\Omega)}} 
=
\sup_{0\ne z_h\in V_h}
\frac{a_{v,h}^{L}(\theta_v^n,z_h)}
     {\lVert z_h\rVert _{H(\nabla\cdot;\Omega)}} 
\le
C
\left[
a_{v,h}^{L}(\theta_v^n,\theta_v^n)
\right]^{1/2},
\end{aligned}
\]
where the last inequality follows from the uniform equivalence \eqref{eq:lumped-equivalence} and
continuity \eqref{eq:lumped-continuity} of \(a_{v,h}^{L}\). Hence, Young's inequality gives
\[
\left|
c_0(R_p^n,\theta_p^n)
+
\alpha(R_u^n,\theta_p^n)
\right|
\le
\frac12
a_{v,h}^{L}(\theta_v^n,\theta_v^n)
+
C\left(
\lVert R_p^n\rVert^2
+
\lVert R_u^n\rVert^2
\right).
\]
The identity
\[
  (\delta_t\theta_p^n,\theta_p^n)
  =
  \frac{1}{2\tau}
  \left(
    \lVert\theta_p^n\rVert^2
   -\lVert\theta_p^{n-1}\rVert^2
   +\lVert\theta_p^n-\theta_p^{n-1}\rVert^2
  \right),
\]
together with the preceding estimate \eqref{preceding-estimate1}, after multiplication by
\(2\tau\) and summation from \(k=1\) to \(m\), \(1\le m\le n\), yields
\[
\begin{aligned}
&
\left(
c_0+\frac{\alpha^2}{\lambda+2\mu}
\right)
\|\theta_p^m\|^2
+
\tau\sum_{k=1}^{m}
a_{v,h}^{L}(\theta_v^k,\theta_v^k)
\le
C\tau\sum_{k=1}^{m}
\left(
\|R_p^k\|^2
+
\|R_u^k\|^2
\right),
\end{aligned}
\]
where we have used \(\theta_p^0=0\) from
\eqref{eq:projected-initialization} and dropped the nonnegative term.
Taking \(m=n\) gives the bound for the full flux sum, while maximizing the pressure term over \(0\le m\le n\) gives the corresponding \(L^\infty\)-in-time bound. 
Combining these estimates and applying \eqref{eq:error-residual-estimate} yields \eqref{eq:discrete-error-energy}.

For the second energy estimate, applying the backward difference operator \(\delta_t\) to \eqref{eq:reduced-error-darcy} and testing with \(z_h=\theta_v^k\) gives
\[
a_{v,h}^{L}(\delta_t\theta_v^k,\theta_v^k)
-
(\delta_t\theta_p^k,\nabla\cdot\theta_v^k)
=0.
\]
Since \(a_{v,h}^{L}(\cdot,\cdot)\) is symmetric and time independent, there holds
\[
a_{v,h}^{L}(\delta_t\theta_v^k,\theta_v^k)
=
\frac12
\delta_t a_{v,h}^{L}(\theta_v^k,\theta_v^k)
+
\frac{\tau}{2}
a_{v,h}^{L}(\delta_t\theta_v^k,\delta_t\theta_v^k).
\]
Hence, we obtain
\begin{align}
\label{error-estimation2}
(\delta_t\theta_p^k,\nabla\cdot\theta_v^k)
=
\frac12
\delta_t a_{v,h}^{L}(\theta_v^k,\theta_v^k)
+
\frac{\tau}{2}
a_{v,h}^{L}(\delta_t\theta_v^k,\delta_t\theta_v^k).
\end{align}

Testing \eqref{eq:reduced-error-mass} with
\(q_h=\delta_t\theta_p^k\) gives
\[
\left(c_0+\frac{\alpha^2}{\lambda+2\mu}\right)
\lVert\delta_t\theta_p^k\rVert^2
+
(\nabla\cdot\theta_v^k,\delta_t\theta_p^k)
=
c_0(R_p^k,\delta_t\theta_p^k)
+\alpha(R_u^k,\delta_t\theta_p^k).
\]
Combining the identity \eqref{error-estimation2} and applying Young's inequality, we obtain
\[
\begin{aligned}
&\frac12 \delta_t a_{v,h}^{L}(\theta_v^k,\theta_v^k)
+\frac{\tau}{2} a_{v,h}^{L}(\delta_t\theta_v^k,\delta_t\theta_v^k)
+\left(c_0+\frac{\alpha^2}{\lambda+2\mu}\right)
\lVert\delta_t\theta_p^k\rVert^2
\\
& \le
C\left(
\lVert R_p^k\rVert^2
+\lVert R_u^k\rVert^2
\right)
+\frac12
\left(c_0+\frac{\alpha^2}{\lambda+2\mu}\right)
\lVert\delta_t\theta_p^k\rVert^2.
\end{aligned}
\]
Multiplying by \(2\tau\), summing from \(k=1\) to \(m\), \(1\le m\le n\), we derive
\[
\begin{aligned}
&a_{v,h}^{L}(\theta_v^m,\theta_v^m)
+
\left(c_0+\frac{\alpha^2}{\lambda+2\mu}\right)
\tau\sum_{k=1}^{m}
\lVert\delta_t\theta_p^k\rVert^2
\le
C\tau\sum_{k=1}^{m}
\left(
\lVert R_p^k\rVert^2
+\lVert R_u^k\rVert^2
\right),
\end{aligned}
\]
where we have used \(\theta_v^0=0\) from
\eqref{eq:projected-initialization} and dropped the nonnegative term. 
Taking \(m=n\) and maximizing over \(0\le m\le n\), then applying \eqref{eq:error-residual-estimate}, yields \eqref{eq:discrete-flux-energy}.
\end{proof}

\begin{theorem}[First-order convergence]
\label{thm:first-order-convergence}
Assume the conditions of Lemma~\ref{lem:reduced-discrete-error} and the projection estimates \eqref{eq:darcy-projection-L2-estimate}, \eqref{eq:mechanics-projection-estimate} hold.
Then, for every \(1\le n\le N\),
\begin{align*}
%\label{eq:fully-discrete-error-estimate}
\max\left\{
\lVert r(t_n)-r_h^n\rVert_{H(\nabla\times;\Omega)},
\lVert u(t_n)-u_h^n\rVert_{H(\nabla\cdot;\Omega)},
\lVert v(t_n)-v_h^n\rVert,
\lVert p(t_n)-p_h^n\rVert
\right\} 
\lesssim h+\tau.
\end{align*}
\end{theorem}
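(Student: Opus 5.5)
The plan is to use the error splitting $z(t_n)-z_h^n=\eta_z^n+\theta_z^n$ for $z\in\{r,u,v,p\}$ together with the triangle inequality. Each projection error $\eta_z^n$ is then bounded by the projection estimates, and each discrete error $\theta_z^n$ by Lemma~\ref{lem:reduced-discrete-error}. No new energy argument is needed; the work is in collecting bounds and converting the lumped flux norm into the $L^2$ norm.

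\emph{Projection errors.} Evaluate \eqref{eq:darcy-projection-L2-estimate} at $t=t_n$ to get $\lVert\eta_v^n\rVert+\lVert\eta_p^n\rVert\lesssim h$. Evaluate \eqref{eq:mechanics-projection-estimate} at $t=t_n$ to get $\lVert\eta_r^n\rVert_{H(\nabla\times;\Omega)}+\lVert\eta_u^n\rVert_{H(\nabla\cdot;\Omega)}\lesssim h$. The hidden constants involve $\sup_t$ of the spatial norms of $(r,u,v,p)$. I will assume these are finite under the standing regularity, for instance via the embedding $H^1(0,T_f;X)\hookrightarrow C([0,T_f];X)$, which is already implicit in \eqref{eq:error-residual-estimate}.

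\emph{Discrete errors.} By \eqref{eq:discrete-error-energy}, $(c_0+\alpha^2/(\lambda+2\mu))\lVert\theta_p^n\rVert^2\le C(h^2+\tau^2)$. Since $\alpha^2/(\lambda+2\mu)>0$ even when $c_0=0$, this gives $\lVert\theta_p^n\rVert\lesssim h+\tau$. The mechanics reduction \eqref{eq:discrete-mechanics-reduction} then yields $\theta_r^n=0$ and $\lVert\theta_u^n\rVert_{H(\nabla\cdot;\Omega)}\le C\lVert\theta_p^n\rVert\lesssim h+\tau$. This is why the rotation error carries no $\theta$-contribution in the stronger $H(\nabla\times;\Omega)$ norm.

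\emph{Flux error (main step).} Estimate \eqref{eq:discrete-error-energy} controls the flux only in a time-averaged sense, so I will use the pointwise-in-time estimate \eqref{eq:discrete-flux-energy} instead. It gives $a_{v,h}^L(\theta_v^n,\theta_v^n)\le C(h^2+\tau^2)$. The lower bound of Lemma~\ref{lem:lumped-equivalence}, which uses the uniform well-centeredness \eqref{eq:uniform-well-centered}, gives $c_LK^{-1}\lVert\theta_v^n\rVert^2\le a_{v,h}^L(\theta_v^n,\theta_v^n)$, hence $\lVert\theta_v^n\rVert\lesssim h+\tau$. This norm conversion is where the geometric hypothesis enters. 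It is also why the theorem states only the $L^2$ flux error and not the $H(\nabla\cdot)$ error: the divergence of $\theta_v^n$ is not controlled by the available estimates.

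Combining the four componentwise bounds with the triangle inequality and taking the maximum proves the claim.
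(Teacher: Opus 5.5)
Your proposal is correct and follows the paper's proof: you split the error as $\eta_z^n+\theta_z^n$, evaluate the projection estimates at $t_n$, invoke Lemma~\ref{lem:reduced-discrete-error} (using $\theta_r^n=0$, the $H(\nabla\cdot;\Omega)$ bound for $\theta_u^n$, and the pointwise-in-time flux bound \eqref{eq:discrete-flux-energy}), and use the lower bound in \eqref{eq:lumped-equivalence} to pass from the lumped flux norm to $L^2$. One small correction to your closing aside: $\nabla\cdot\theta_v^n$ is controlled in the discrete $L^2$-in-time sense, by testing \eqref{eq:reduced-error-mass} with $q_h=\nabla\cdot\theta_v^n$ and using \eqref{eq:discrete-flux-energy}, so ``not controlled'' should read ``not controlled pointwise in time.''
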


\begin{proof}
The result follows from the error decomposition
\[
z(t_n)-z_h^n
=
\eta_z^n+\theta_z^n,
\qquad
z\in\{r,u,v,p\},
\]
the projection estimates
\eqref{eq:darcy-projection-L2-estimate} and
\eqref{eq:mechanics-projection-estimate},
the norm equivalence \eqref{eq:lumped-equivalence}, and
Lemma~\ref{lem:reduced-discrete-error}.
\end{proof}

%\section{Convergence analysis}
%\label{sec:convergence}

%This section establishes the error estimate for the numerical approximation \eqref{eq:discrete-weak-form}. We retain the quasi-uniform mesh family, discrete spaces, and structural assumptions introduced above. 

\section{Numerical experiments}
\label{sec:numerics}

This section presents numerical experiments to illustrate the theoretical properties of the proposed method.
We first examine the discrete pressure monotonicity and the predicted first-order convergence rates on a uniformly acute mesh family in two dimensions. 
A Barry--Mercer problem is then considered to assess the preservation of nonnegative cellwise pressures under a localized approximation of point-source injection.
Finally, a three-dimensional test on a well-centered tetrahedral mesh demonstrates that the same pressure monotonicity mechanism remains effective in three dimensions.
All computations are carried out using FEniCSx \cite{BarattaEtal2023}.
For the convergence, we report the following relative errors:
\begin{align*}
& \widehat E_r
:=
\frac{\lVert r- r_h\rVert_{H(\nabla\times;\Omega)}}
{\lVert r\rVert_{H(\nabla\times;\Omega)}},
\
\widehat E_u
:=
\frac{\lVert u - u_h\rVert_{H(\nabla\cdot;\Omega)}}
{\lVert u\rVert_{H(\nabla\cdot;\Omega)}}, 
\
\widehat E_v
:=
\frac{\lVert v- v_h\rVert}
{\lVert v\rVert},
\
\widehat E_p
:=
\frac{\lVert p -p_h\rVert}
{\lVert p\rVert}.
\end{align*}

\subsection{Two-dimensional monotonicity and convergence}
\label{subsec:numerics-two-dimensional}

In this test, we employ the manufactured solutions on the unit square $\Omega=(0,1)^2$ to examine the pressure monotonicity of the scheme and the convergence behavior predicted by Theorem~\ref{thm:first-order-convergence}.
For the boundary decomposition, we set
\(
\Gamma_u:=\{1\}\times(0,1),
\)
and
\(
\Gamma_r:=\partial\Omega\setminus\overline{\Gamma_u}.
\) 
The auxiliary functions and the corresponding exact solution are given by
\begin{align*}
& \phi(x,y)=\sin^4(\pi x)\sin^4(\pi y),
\qquad
\psi(x,y)=\sin^2(\pi x)\sin^2(\pi y),
\\
& u = A(t) (\nabla\phi-0.3\nabla\times\phi ), 
\quad
r = \mu\nabla\times u, 
\quad
p = A(t) \psi, 
\quad
v = -K\nabla p.
\end{align*}
The prescribed solution satisfies the homogeneous boundary conditions.
The forcing terms are obtained by substitution into the continuous system.
For the physical parameters, we consider
\[
\mu=\lambda=\alpha=c_0=K=1.
\]

\paragraph{Mesh setting} 
We use the acute triangulation \(\mathcal T_0\) of the unit square \cite{eppstein2004tiling}, shown on the left of Figure~\ref{fig:acute-square-refinement}. 
The mesh consists of 14 triangles, has maximum element diameter \(h_0=0.54\), and is uniformly acute, thereby ensuring the positivity of the local circumcentric distances used in the Darcy discretization.
A mesh family \(\{\mathcal T_\ell\}_{\ell\ge0}\) is generated by uniform red refinement, in which each triangle is subdivided into four similar triangles by joining the edge midpoints. 
The first refinement \(\mathcal T_1\) is shown on the right of Figure~\ref{fig:acute-square-refinement}. Hence, the family remains uniformly acute and satisfies \(h_\ell=2^{-\ell}h_0\).
For this self-similar mesh family, a direct computation gives
\[
  \min_{T,F}\frac{d_{T,F}^{s}}{h_T}
  =
  0.1624598481
\]
at every level. Thus, the mesh family satisfies both the positive-weight condition of Section~\ref{sec:monotonicity} and the quantitative well-centeredness condition of Section~\ref{sec:convergence}.
\begin{figure}[H]
  \centering
  \begin{tikzpicture}[
      x=3.0cm,y=3.0cm,
      mesh/.style={draw=blue!65!black,line width=0.55pt},
      boundary/.style={draw=black,line width=1.0pt},
      vertex/.style={circle,fill=black,inner sep=1.1pt}]
    \coordinate (A) at (0,0);
    \coordinate (B) at (1,0);
    \coordinate (C) at (1,1);
    \coordinate (D) at (0,1);
    \coordinate (E) at (0.4596495484,1);
    \coordinate (F) at (1,0.4596495484);
    \coordinate (G) at (0.5403504516,0);
    \coordinate (H) at (0,0.5403504516);
    \coordinate (I) at (0.3044993700,0.6955006300);
    \coordinate (J) at (0.6420395219,0.6420395219);
    \coordinate (K) at (0.3579604781,0.3579604781);
    \coordinate (L) at (0.6955006300,0.3044993700);
    \foreach \a/\b/\c in
      {D/I/E,D/H/I,C/E/J,C/J/F,A/G/K,A/K/H,B/L/G,B/F/L,
       I/J/E,I/K/J,I/H/K,L/F/J,L/J/K,L/K/G}
      {\draw[mesh] (\a)--(\b)--(\c)--cycle;}
    \draw[boundary] (A)--(B)--(C)--(D)--cycle;
    %\node[font=\small] at (0.5,-0.16)
    %  {(a) Fourteen-triangle acute triangulation \(\mathcal T_0\)};
  \end{tikzpicture}
  \hspace{0.6cm}
  \begin{tikzpicture}[
      x=3.0cm,y=3.0cm,
      coarse/.style={draw=black!45,line width=0.45pt},
      refined/.style={draw=red!75!black,line width=0.55pt},
      boundary/.style={draw=black,line width=1.0pt}]
    \coordinate (A) at (0,0);
    \coordinate (B) at (1,0);
    \coordinate (C) at (1,1);
    \coordinate (D) at (0,1);
    \coordinate (E) at (0.4596495484,1);
    \coordinate (F) at (1,0.4596495484);
    \coordinate (G) at (0.5403504516,0);
    \coordinate (H) at (0,0.5403504516);
    \coordinate (I) at (0.3044993700,0.6955006300);
    \coordinate (J) at (0.6420395219,0.6420395219);
    \coordinate (K) at (0.3579604781,0.3579604781);
    \coordinate (L) at (0.6955006300,0.3044993700);
    \foreach \a/\b/\c in
      {D/I/E,D/H/I,C/E/J,C/J/F,A/G/K,A/K/H,B/L/G,B/F/L,
       I/J/E,I/K/J,I/H/K,L/F/J,L/J/K,L/K/G}
      {
        \draw[coarse] (\a)--(\b)--(\c)--cycle;
        \draw[refined]
          ($(\a)!0.5!(\b)$)--
          ($(\b)!0.5!(\c)$)--
          ($(\c)!0.5!(\a)$)--cycle;
      }
    \draw[boundary] (A)--(B)--(C)--(D)--cycle;
    %\node[font=\small] at (0.5,-0.16) {(b) One red refinement \(\mathcal T_1\)};
  \end{tikzpicture}
  \caption{Acute mesh family used in the two-dimensional numerical tests.
  Left: the initial 14-triangle mesh \(\mathcal T_0\).
  Right: its first uniform red refinement \(\mathcal T_1\).}
  \label{fig:acute-square-refinement}
\end{figure}

\paragraph{Monotonicity test}
We first examine the condensed pressure matrices with \(T_f= \tau=0.1\).
Recall from \eqref{eq:pressure-propagation} that
\(
S_h=K_h^{-1}H_h.
\)
For the refinement levels \(\ell=0,\ldots,3\), the relative defect in the
mechanics Schur identity
\(
B_uA_{u,h}^{-1}B_u^\top=(\lambda+2\mu)^{-1}M_p
\)
remains below \(2.9\times10^{-15}\), confirming \eqref{eq:mechanics-schur} to the machine precision.
The maximum off-diagonal entry of \(K_h\) is below \(5.8\times10^{-18}\), consistent with the \(Z\)-matrix property up to roundoff, while both \(K_h^{-1}\) and \(S_h\) are entrywise nonnegative.
As shown in Table~\ref{tab:two-d-matrix-audit}, \(K_h\) is positive definite and
\(
0<S_h\one<\one.
\)
These results confirm the Stieltjes structure of \(K_h\) and the row-substochastic property of \(S_h\) established in Section~\ref{sec:monotonicity}.

\begin{table}[H]
  \centering
  \caption{Pressure monotonicity check on the acute mesh family for
  \(\tau=0.1\).}
  \label{tab:two-d-matrix-audit}
  \small
  \setlength{\tabcolsep}{4pt}
  \begin{tabular}{@{}ccccc@{}}
    \hline
    \(\ell\) &
    \(\lambda_{\min}(K_h)\) &
    \(\min_{i,j}(K_h^{-1})_{ij}\) &
    \(\min_{i,j}(S_h)_{ij}\) &
    \([\min S_h\one,\max S_h\one]\) \\
    \hline
    0 & \(1.889\times10^{-1}\) & \(2.765\times10^{-3}\) &
        \(2.580\times10^{-4}\) & \([0.1722,0.5434]\) \\
    1 & \(4.986\times10^{-2}\) & \(1.702\times10^{-4}\) &
        \(3.970\times10^{-6}\) & \([0.0650,0.5389]\) \\
    2 & \(1.258\times10^{-2}\) & \(1.027\times10^{-5}\) &
        \(5.988\times10^{-8}\) & \([0.0222,0.5517]\) \\
    3 & \(3.153\times10^{-3}\) & \(6.303\times10^{-7}\) &
        \(9.189\times10^{-10}\) & \([0.0071,0.5550]\) \\
    \hline
  \end{tabular}
\end{table}

The same matrix audit is repeated for \(\tau=10^{-3},10^{-5},10^{-7},10^{-9}\).
In all cases, the computed pressure remains nonnegative up to roundoff, indicating that reducing the time step does not introduce spurious negative cellwise pressures.

\paragraph{Space--time convergence}
We refine space and time simultaneously with \(A(t)=e^t\) and \(T_f=1.0\).
We use the mesh levels \(\ell=0,1,2,3\) with \(\tau_\ell=2^{-(\ell+2)}\), so that both the mesh size and the time step decrease by a factor of two at each refinement.
The observed orders in Table~\ref{tab:two-d-spacetime-errors} are computed between successive coupled refinements and confirm the predicted first-order space-time convergence.

\begin{table}[H]
  \centering
  \caption{Space--time errors and observed orders under coupled refinement.}
  \label{tab:two-d-spacetime-errors}
  \small
  \setlength{\tabcolsep}{3pt}
  %\resizebox{\textwidth}{!}{%
  \begin{tabular}{@{}ccccccccc@{}}
    \hline
    \(\ell\)
    & \(\widehat E_r\) & Order
    & \(\widehat E_u\) & Order
    & \(\widehat E_v\) & Order
    & \(\widehat E_p\) & Order \\
    \hline
    0
    & \(9.644\times10^{-1}\) & --
    & \(5.420\times10^{-1}\) & --
    & \(5.712\times10^{-1}\) & --
    & \(4.622\times10^{-1}\) & -- \\
    1
    & \(5.764\times10^{-1}\) & 0.742
    & \(4.022\times10^{-1}\) & 0.430
    & \(3.138\times10^{-1}\) & 0.864
    & \(2.279\times10^{-1}\) & 1.020 \\
    2
    & \(3.117\times10^{-1}\) & 0.887
    & \(2.101\times10^{-1}\) & 0.937
    & \(1.599\times10^{-1}\) & 0.973
    & \(1.141\times10^{-1}\) & 0.998 \\
    3
    & \(1.585\times10^{-1}\) & 0.976
    & \(1.066\times10^{-1}\) & 0.979
    & \(8.039\times10^{-2}\) & 0.992
    & \(5.711\times10^{-2}\) & 0.998 \\
    \hline
  \end{tabular}%}
\end{table}

\subsection{Barry--Mercer's problem}
\label{subsec:numerics-barry-mercer}

In this section, we consider the Barry--Mercer problem
\cite{barry1999exact} on \(\Omega=(0,1)^2\) with a point-source
injection centered at \(\boldsymbol x_0=(1/4,1/4)\).
We use this example to examine the accuracy of the proposed method and,
in particular, the preservation of nonnegative pressures.
We consider parameters \(\alpha=1.0\) and \(c_0=0\), define \(\omega:=\lambda+2\mu \) and prescribe the body force and the fluid source by
\begin{equation}
\label{original-dirac}
f(\boldsymbol x,t) = 0 , \qquad s(\boldsymbol x,t)
=
2K \omega \,
\delta_{\boldsymbol x_0}(\boldsymbol x)
\sin\bigl(K \omega t\bigr).
\end{equation}
We take
\(
\Gamma_r=\varnothing
\) and \(
\Gamma_u=\partial\Omega,
\)
so that the boundary conditions reduce to
\begin{equation*}
%\label{eq:barry-mercer-boundary}
\nu\times u=0,
\qquad
(\lambda+2\mu)\nabla\cdot u-p=0,
\qquad
p=0
\qquad\text{on }\partial\Omega.
\end{equation*}
The system is initialized at the zero state.
The analytical pressure is then given by
\begin{align*}
& p(x,y,t)=8 \omega
\sum_{m,n=1}^{\infty}
a_{mn}F_{mn}(t)
\sin(m\pi x)\sin(n\pi y),
\\
& u(x,y,t)=-8
\sum_{m,n=1}^{\infty}
\frac{a_{mn}F_{mn}(t)}{\gamma_{mn}}
\begin{pmatrix}
m\pi\cos(m\pi x)\sin(n\pi y)
\\ %[2mm]
n\pi\sin(m\pi x)\cos(n\pi y)
\end{pmatrix},
\end{align*}
where 
\begin{align*}
& a_{mn}
=
\sin\left(\frac{m\pi}{4}\right)
\sin\left(\frac{n\pi}{4}\right),
\qquad
\gamma_{mn}
=
\pi^2(m^2+n^2),
\\
&F_{mn}(t)
=
\frac{
\gamma_{mn}\sin\bigl(K\omega t\bigr)
-\cos\bigl(K \omega t\bigr)
+\exp\!\bigl[-\gamma_{mn}K \omega t\bigr]
}{
\gamma_{mn}^2+1
}.
\end{align*}
The remaining variables satisfy
\begin{equation*}
%\label{eq:barry-mercer-exact-relations}
v=-K\nabla p,
\qquad
r=\mu\nabla\times u=0,
\qquad
\nabla\cdot u=\frac{p}{\lambda+2\mu}.
\end{equation*}

\paragraph{Mesh and source regularization}
Starting from the level-two acute mesh \(\mathcal T_2\) introduced in  Subsection~\ref{subsec:numerics-two-dimensional}, we apply a smooth interior perturbation while keeping the boundary fixed and denote the resulting mesh by \(\widetilde{\mathcal T}_2\).
The perturbation is chosen so that \(\boldsymbol x_0\) is the barycenter of a unique cell \(\widetilde{\mathcal T}_2^\star\).
The mesh \(\widetilde{\mathcal T}_2\) remains acute, with angles between
\(37.93^\circ\) and \(81.60^\circ\), and satisfies
\[
\min_{T\in \widetilde{\mathcal{\mathcal T}}_2}
\min_{F\subset\partial T}
\frac{d_{T,F}^{s}}{h_T}
=
0.07383.
\]
For \(\ell>2\), the meshes are generated by uniform red refinement. The angle bounds and the well-centeredness are preserved.
Moreover, \(\boldsymbol x_0\) remains the barycenter of a unique cell \(\widetilde{T}_\ell^\star\) at every refinement level.
Since \(P_h=\mathrm{DG}_0\) and \(\boldsymbol x_0\) lies in the interior
of \(\widetilde T_\ell^\star\), we represent the Dirac distribution
\(\delta_{\boldsymbol x_0}\) in \eqref{original-dirac} as follows.
\begin{equation}
\label{eq:barry-mercer-discrete-dirac}
\delta_{h,\boldsymbol x_0}(\boldsymbol x)
:=
\frac{1}{|\widetilde T_\ell^\star|}
\chi_{\widetilde T_\ell^\star}(\boldsymbol x),
\qquad
\int_\Omega \delta_{h,\boldsymbol x_0}\,\mathrm dx=1.
\end{equation}

\paragraph{Comparison of the Darcy products}

In this test, we consider the remaining physical parameters as \(\mu=0.4\), \(\lambda=0.2\), and \(K=1.0\).
We compare the proposed method, based on the circumcentric product
\(a_{v,h}^{L}\) defined in \eqref{eq:mass-lumped-darcy-form}, with the
consistent product
\[
a_{v,h}^{C}( v_h, z_h)
:=
K^{-1}( v_h, z_h).
\]
All other spaces, forms, data, and time-stepping terms are kept identical.
The numerical solutions are compared with the analytical series at the final time
\(
T_f=\pi/(K\omega).
\)
For the convergence study, the time steps are chosen such that \(\tau_\ell \simeq h_\ell^2\), so that temporal errors are negligible relative to the spatial errors.

Table~\ref{tab:barry-mercer-exact-errors} shows that the errors for both the circumcentric and consistent Darcy products decrease at approximately first order in displacement, flux, and pressure.
The consistent product gives slightly smaller errors, while the circumcentric product exhibits essentially the same convergence behavior.
The results also show that the circumcentric scheme retains comparable accuracy.
Since the exact rotation vanishes, its relative error is omitted.

\begin{table}[H]
  \centering
  \caption{Relative errors against the Barry--Mercer analytical solutions at \(T_f=\pi/(K\omega)\) for the circumcentric and consistent Darcy products.}
  \label{tab:barry-mercer-exact-errors}
  \small
  \begin{tabular}{@{}ccccc@{}}
    \hline
    Product & \(\ell\)
    & \(\widehat E_u\)
    & \(\widehat E_v\)
    & \(\widehat E_p\) \\
    \hline
    \(a_{v,h}^{L}\) & 2
    & \(1.1750\times10^{-1}\)
    & \(1.4211\times10^{-1}\)
    & \(1.1765\times10^{-1}\) \\
    \(a_{v,h}^{L}\) & 3
    & \(6.0373\times10^{-2}\;(0.961)\)
    & \(7.2354\times10^{-2}\;(0.974)\)
    & \(6.0418\times10^{-2}\;(0.961)\) \\
    \hline
    \(a_{v,h}^{C}\) & 2
    & \(1.0929\times10^{-1}\)
    & \(1.3622\times10^{-1}\)
    & \(1.0921\times10^{-1}\) \\
    \(a_{v,h}^{C}\) & 3
    & \(5.4745\times10^{-2}\;(0.997)\)
    & \(6.8590\times10^{-2}\;(0.990)\)
    & \(5.4708\times10^{-2}\;(0.997)\) \\
    \hline
  \end{tabular}
\end{table}

\paragraph{Pressure positivity}
For the positivity test, we use the low-permeability regime with \(\mu=1.0\), \(\lambda=10^4\), and \(K=10^{-6}\).
Figure~\ref{fig:barry-mercer-pressure-comparison} shows the pressure after the first positive injection step on \(\mathcal T_3\) at \(t_1=5.387\times10^{-3}\). 
Cells with significant negative pressure are outlined in blue.
For the circumcentric scheme, the pressure remains nonnegative and is concentrated around the injection point. 
In contrast, the consistent product produces a region of negative pressure surrounding the positive core, with 105 negative cells at the first step. 
This comparison highlights the positivity-preserving behavior of the circumcentric Darcy discretization, with no spurious negative cellwise pressures observed in this test.

\begin{figure}[htbp]
  \centering
  \includegraphics
  [
    width=0.8\textwidth,
    trim=0cm 0.2cm 0cm 0cm,
    clip
  ]
    {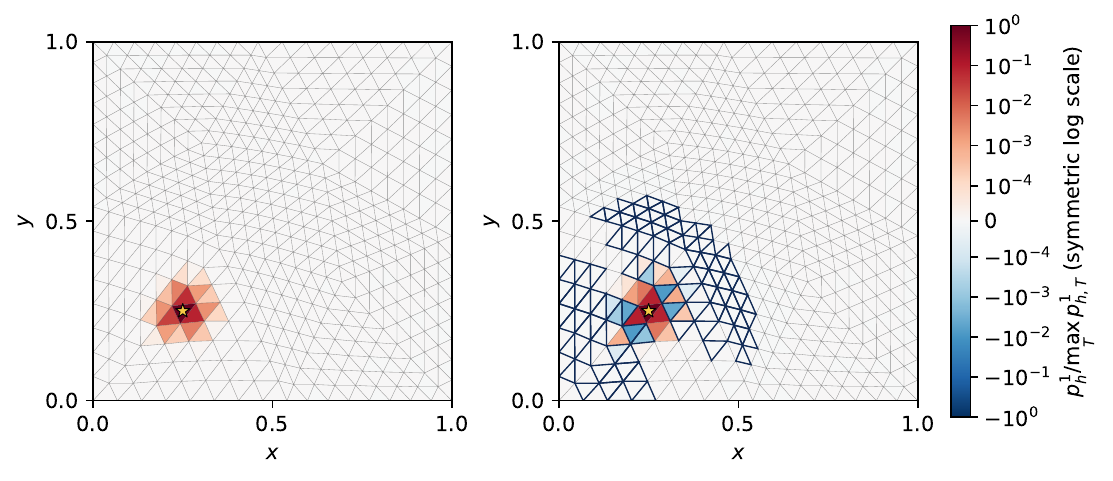}
    \caption{Normalized pressure after the first positive injection step on
    \(\mathcal T_3\): circumcentric lumping (left) and the consistent
    \(\mathrm{RT}_0\) Darcy product (right).}
  \label{fig:barry-mercer-pressure-comparison}
\end{figure}

\subsection{Three-dimensional tests on a Sommerville mesh family}
\label{subsec:numerics-three-dimensional}

In this test, we consider the following special three-dimensional domain:
\begin{align*}
  B=
  \begin{pmatrix}
    0 & \frac12 & \frac12 \\[0.3mm]
    0 & \frac{\sqrt3}{2} & -\frac{\sqrt3}{2} \\[0.3mm]
    \frac{3}{2\sqrt2} & \frac1{\sqrt2} & -\frac1{2\sqrt2}
  \end{pmatrix},
  \qquad
  \Omega_S=\{B\boldsymbol\xi:\boldsymbol\xi\in(0,1)^3\},
\end{align*}
which defines an oblique parallelepiped.
For \(\boldsymbol x\in\Omega_S\), let
\(
  \boldsymbol\xi
  =(\xi_1,\xi_2,\xi_3)^\top
  :=B^{-1}\boldsymbol x\in(0,1)^3
\)
denote the corresponding reference coordinates. 
A well-centered Sommerville mesh family
\cite{korotov2014red} is constructed on \(\Omega_S\).
Let \(\mathcal T_1\) be the six-tetrahedron Sommerville partition of \(\Omega_S\), and let \(\{\mathcal T_N\}_{N\ge1}\) denote the self-similar refinement family. Then
\begin{align*}
  h_N=\frac{1}{N}\sqrt{\frac32},
  \qquad
  \min_{T,F}\frac{d_{T,F}^{s}}{h_T}
  =\frac{1}{4\sqrt2}.
\end{align*}
so the family is uniformly well centered. Under \(N\mapsto2N\), each tetrahedron is subdivided into eight similar children, as illustrated in Figure~\ref{fig:3d-mesh}.

\begin{figure}[htbp]
  \centering
  \includegraphics[
    width=0.42\textwidth,
    trim=1.45cm 1.48cm 1.45cm 2.2cm,
    clip
  ]{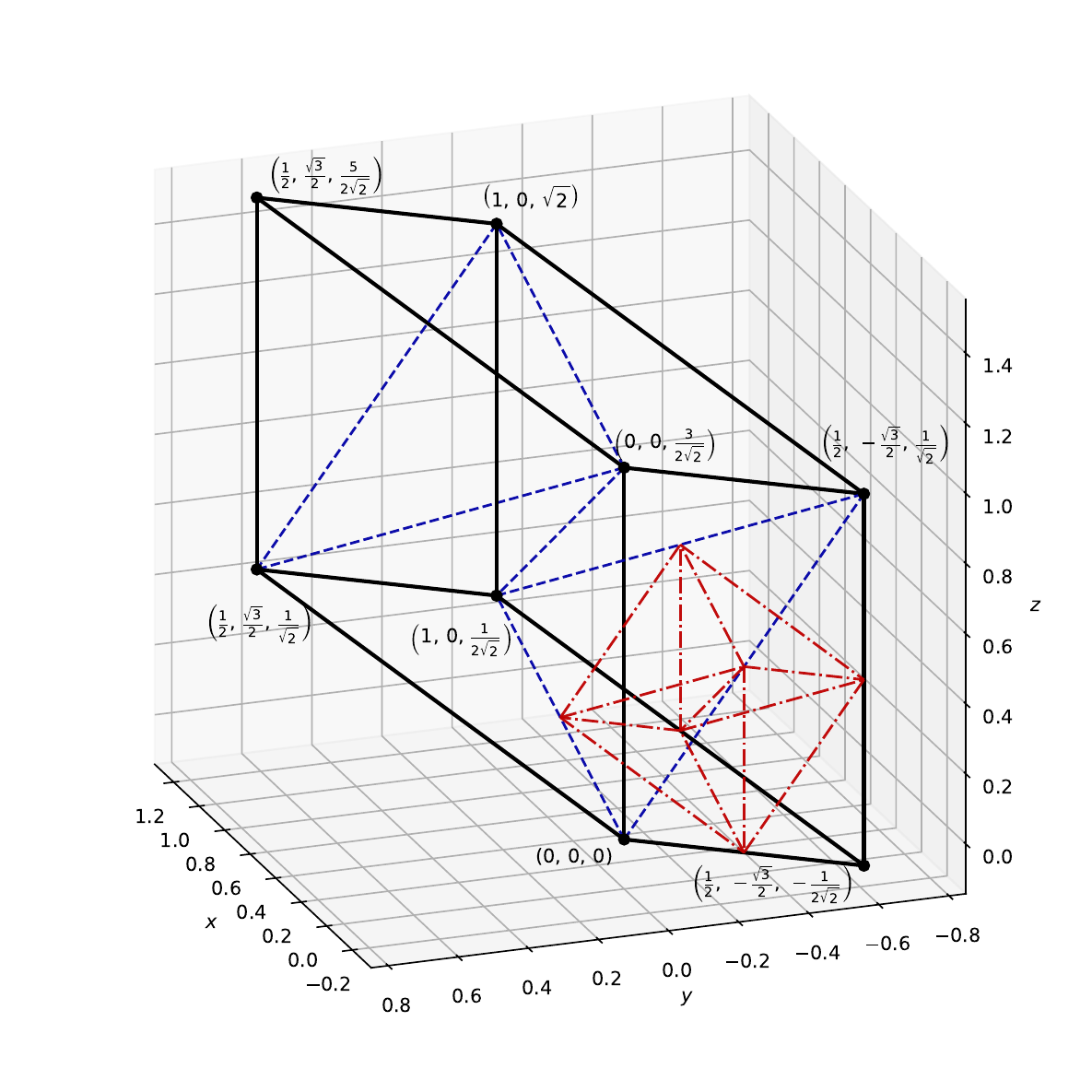}
  \caption{Sommerville mesh family used in the three-dimensional numerical
  tests. The black solid lines show the boundary of \(\Omega_S\), the blue
  dashed lines the six-tetrahedron coarse decomposition, and the red
  dash-dotted lines a representative one-to-eight refinement.}
  \label{fig:3d-mesh}
\end{figure}

\paragraph{Monotonicity test}
We examine the pressure monotonicity for different physical parameters on the fixed mesh \(\mathcal T_4\), which consists of 384 tetrahedra, and take \(\tau=10^{-3}\) throughout.
For the mechanics boundary decomposition, we define \(\Gamma_r\) as the image under \(B\) of the face \(\xi_3=0\) of the reference cube:
\(
  \Gamma_r
  =
  \{
  B(\xi_1,\xi_2,0)^\top:
  0<\xi_1,\xi_2<1
  \}
\),
\(
  \Gamma_u
  =
  \partial\Omega_S\setminus\overline{\Gamma_r}.
\)
We set \(\mu=\alpha=1\) and consider \(\lambda\in\{1,10^4\}\), \(c_0\in\{0,1\}\), and \(K\in\{1,10^{-6}\}\), giving eight parameter combinations.
Using the same mesh, boundary conditions, and time step in all cases isolates the effect of the material parameters.

As shown in Table~\ref{tab:three-d-matrix-audit}, \(K_h\) is positive definite for all eight parameter combinations, while \(K_h^{-1}\) and \(S_h\) are entrywise nonnegative up to roundoff. The row sums of \(S_h\) are also bounded by one up to roundoff. Accordingly, \(S_h\) is row-substochastic up to roundoff, and the same pressure-monotonicity mechanism is observed in three dimensions.

\begin{table}[H]
  \centering
  \caption{Pressure monotonicity check on the three-dimensional
  Sommerville mesh \(\mathcal T_4\).}
  \label{tab:three-d-matrix-audit}
  \small
  \setlength{\tabcolsep}{3pt}
  \begin{tabular}{@{}ccccccc@{}}
    \hline
    \(\lambda\) &
    \(c_0\) &
    \(K\) &
    \(\lambda_{\min}(K_h)\) &
    \(\min_{i,j}(K_h^{-1})_{ij}\) &
    \(\min_{i,j}(S_h)_{ij}\) &
    \([\min S_h\one,\max S_h\one]\) \\
    \hline
    \(1\) &
    \(1\) &
    \(1\) &
    \(3.275\times10^{-3}\) &
    \(8.399\times10^{-31}\) &
    \(2.679\times10^{-33}\) &
    \([0.64550,0.99998]\) \\

    \(1\) &
    \(1\) &
    \(10^{-6}\) &
    \(3.189\times10^{-3}\) &
    \(1.294\times10^{-229}\) &
    \(4.128\times10^{-232}\) &
    \([0.99999,1.00000]\) \\

    \(1\) &
    \(0\) &
    \(1\) &
    \(8.826\times10^{-4}\) &
    \(1.882\times10^{-18}\) &
    \(1.501\times10^{-21}\) &
    \([0.32700,0.99630]\) \\

    \(1\) &
    \(0\) &
    \(10^{-6}\) &
    \(7.974\times10^{-4}\) &
    \(1.528\times10^{-208}\) &
    \(1.218\times10^{-211}\) &
    \([0.99999,1.00000]\) \\

    \(10^4\) &
    \(1\) &
    \(1\) &
    \(2.478\times10^{-3}\) &
    \(9.385\times10^{-28}\) &
    \(2.245\times10^{-30}\) &
    \([0.58010,0.99994]\) \\

    \(10^4\) &
    \(1\) &
    \(10^{-6}\) &
    \(2.392\times10^{-3}\) &
    \(3.043\times10^{-225}\) &
    \(7.280\times10^{-228}\) &
    \([0.99999,1.00000]\) \\

    \(10^4\) &
    \(0\) &
    \(1\) &
    \(8.550\times10^{-5}\) &
    \(5.865\times10^{-8}\) &
    \(1.403\times10^{-14}\) &
    \([0.00017,0.00442]\) \\

    \(10^4\) &
    \(0\) &
    \(10^{-6}\) &
    \(2.392\times10^{-7}\) &
    \(2.516\times10^{-85}\) &
    \(6.016\times10^{-92}\) &
    \([0.99240,1.00000]\) \\
    \hline
  \end{tabular}
\end{table}

\section{Conclusions}
\label{sec:conclusion}

In this work, we discuss pressure monotonicity  for the lowest-order rotation-based Biot discretization and identify two compatible structures governing the discrete pressure evolution.
Relative exactness yields a positive diagonal mechanics capacity, while circumcentric mass lumping gives a weighted cell-graph Laplacian for the Darcy contribution. 
Under positive facet weights, the resulting pressure matrix is Stieltjes, which leads to a nonnegative pressure propagator and cellwise discrete maximum principles. 
We also established the uniform norm equivalence of the mass-lumped Darcy product and combined it with the corresponding mixed approximation estimates to derive a first-order convergence result.
The numerical experiments confirm the predicted convergence behavior and demonstrate the pressure-monotonicity and positivity-preserving properties in both two and three dimensions.

%We also established uniform norm equivalence and approximation properties for the mass-lumped Darcy product, leading to first-order error estimates. Numerical experiments confirm the predicted convergence behavior and the pressure-monotonicity properties in both two and three dimensions.

%\appendix
%\section{An example appendix} 
%\lipsum[71]

%\section*{Acknowledgments}
%We would like to acknowledge the assistance of volunteers in putting
%together this example manuscript and supplement.

\bibliographystyle{siamplain}
\bibliography{references}

\end{document}